\newtheorem{theo}{Theorem}[section]
\newtheorem{coll}[theo]{Corollary}
\newtheorem{defn}[theo]{Definition}
\newtheorem{ex}[theo]{Example}
\newtheorem{rem}[theo]{Remark}
\newcommand{\Hom}{{\rm Hom}}
\newcommand{\HOM}[3]{\mathrm{HOM}_{#1}(#2,#3)}
\newcommand{\A}{\mathcal A}
\newcommand{\B}{\mathcal B}
\newcommand{\C}{\mathcal C}
\begin{document}
\sloppy

\title[Transfer of CS-Rickart and dual CS-Rickart properties]{Transfer of CS-Rickart and dual CS-Rickart properties 
via functors between abelian categories}

\author[S. Crivei]{Septimiu Crivei}

\address{Faculty of Mathematics and Computer Science, Babe\c s-Bolyai University, Str. M. Kog\u alniceanu 1,
400084 Cluj-Napoca, Romania} \email{crivei@math.ubbcluj.ro}

\author[S.M. Radu]{Simona Maria Radu}

\address{Faculty of Mathematics and Computer Science, Babe\c s-Bolyai University, Str. M. Kog\u alniceanu 1,
400084 Cluj-Napoca, Romania} \email{simonamariar@math.ubbcluj.ro}

\subjclass[2000]{18E10, 18E15, 16D90, 16T15}

\keywords{Abelian category, (dual) Rickart object, (dual) CS-Rickart object, extending object, lifting object, module, 
comodule.}

\begin{abstract} We study the transfer of (dual) relative CS-Rickart properties via functors between abelian categories. We consider fully faithful functors as well as adjoint pairs of functors. We give several applications to Grothendieck categories and, in particular, to (graded) module and comodule categories.
\end{abstract}

\date{April 2, 2020}

\maketitle

\section{Introduction}

CS-Rickart objects in abelian categories have been introduced and studied by us in \cite{CR1} 
as a common generalization of Rickart objects and extending objects. We have also considered dual CS-Rickart objects,
which generalize dual Rickart objects and lifting objects. But these dual results follow automatically
by the duality principle in abelian categories, showing the advantage of working at this level of generality. 
In order to point out some previous interest in such topics, we mention 
Rickart and dual Rickart objects studied by Crivei, K\"or and Olteanu \cite{CK,CO}, which have been proved to be useful 
in the study of regular objects in abelian categories in the sense of 
D\u asc\u alescu, N\u ast\u asescu, Tudorache and D\u au\c s \cite{DNTD}.
They generalize the module-theoretic notions of Rickart and dual Rickart modules in the sense of 
Lee, Rizvi and Roman \cite{LRR10,LRR11}, and in particular, Baer and dual Baer modules studied by Rizvi and Roman 
\cite{RR04,RR09} 
and Keskin T\"ut\"unc\"u and Tribak \cite{KT} respectively. Extending modules (also called CS-modules) 
and lifting modules have been intensively investigated for the last decades, 
due to their important applications to ring and module theory (e.g., see \cite{CLVW,DHSW}).
Let us also note that CS-Rickart objects and dual CS-Rickart objects
extend the module-theoretic concepts of CS-Rickart and dual CS-Rickart modules studied by Abyzov, Nhan and Quynh 
\cite{AN1,ANQ}. In our approach we needed to develop specific categorical techniques in order to generalize results on 
(dual) CS-Rickart modules and extending (lifting) modules to abelian categories.
The reader is referred to \cite{CR1} for further information and motivation on the topic. 

The paper \cite{CR1} contains illustrating examples, and studies direct summands of (dual) CS-Rickart objects, 
(co)products of (dual) CS-Rickart objects as well as classes all of whose objects are (dual) self-CS-Rickart. The 
present paper is a companion of \cite{CR1}, and investigates the transfer of (dual) relative CS-Rickart properties via 
functors between abelian categories. We consider fully faithful functors and adjoint pairs of functors between abelian 
categories, and we discuss the cases of coreflective and reflective abelian full subcategories of abelian categories
as well as adjoint triples of functors and recollements between abelian categories. We present several applications to 
Grothendieck categories and, in particular, to (graded) module and comodule categories. Also, we derive consequences 
for endomorphism rings of (graded) modules and comodules.

\section{Transfer of (dual) CS-Rickart properties via fully faithful functors}

Let $\mathcal{A}$ be an abelian category. For every morphism $f:M\to N$ in $\mathcal{A}$ we denote by
${\rm ker}(f):{\rm Ker}(f)\to M$, ${\rm coker}(f):N\to {\rm Coker}(f)$, 
${\rm coim}(f):M\to {\rm Coim}(f)$ and ${\rm im}(f):{\rm Im}(f)\to N$ the kernel, the cokernel, 
the coimage and the image of $f$ respectively. Since $\A$ is abelian, we have ${\rm Coim}(f)\cong {\rm Im}(f)$. 
Recall that a morphism $f:A\to B$ is called a \emph{section} (\emph{retraction}) if there is a morphism
$f':B\to A$ such that $f'f=1_A$ ($ff'=1_B$).

We also need the following notions. 

\begin{defn}[{\cite[Definition~2.2]{CR1}}] \rm Let $\mathcal{A}$ be an abelian category. 
\begin{enumerate}
\item \rm A monomorphism $f:M \to N$ in $\mathcal{A}$ is called \textit{essential} 
if for every morphism $h:N \to P$ in $\mathcal{A}$ such that $hf$ is a monomorphism, $h$ is a monomorphism.
\item \rm An epimorphism $f:M \to N$ in $\mathcal{A}$ is called \textit{superfluous} 
if for every morphism $h:P \to M$ in $\mathcal{A}$ such that $fh$ is an epimorphism, $h$ is an epimorphism. 
\end{enumerate}             
\end{defn}

\begin{rem} \label{r:es} \rm As in module categories \cite[17.2, 19.2]{Wis}, 
in the definition of essential monomorphisms one may restrict to epimorphisms $h:N\to P$,
while in the definition of superfluous epimorphisms one may restrict to monomorphisms $h:P\to M$. 
\end{rem}

\begin{defn}[{\cite[Definition~2.4]{CR1}}] \rm Let $M$ and $N$ be objects of an abelian category $\mathcal{A}$. 
Then $N$ is called:
\begin{enumerate}
\item \rm {\textit{$M$-CS-Rickart}} if for every morphism $f:M\to N$ there are an 
essential monomorphism $e:{\rm Ker}(f) \to L$ and a section $s:L \to M$ in $\mathcal{A}$ such that ${\rm ker}(f)=se$.
Equivalently, $N$ is $M$-CS-Rickart if and only if for every morphism $f:M\to N$, 
${\rm Ker}(f)$ is essential in a direct summand of $M$.
\item \rm{\textit{dual $M$-CS-Rickart}} if for every morphism $f:M\to N$ there are a    
retraction $r:N \to P$ and a superfluous epimorphism $t:P\to{\rm Coker}(f)$ in $\mathcal{A}$ such that ${\rm 
coker}(f)=tr$.
Equivalently, $N$ is dual $M$-CS-Rickart if and only if for every morphism $f:M\to N$, 
${\rm Im}(f)$ lies above a direct summand of $N$,
in the sense that ${\rm Im}(f)$ contains a direct summand $K$ of $M$ such that ${\rm Im}(f)/K$ is superfluous in $M/K$.
\item \rm {\textit{self-CS-Rickart}} if $N$ is $N$-CS-Rickart.
\item \rm  {\textit{dual self-CS-Rickart}} if $N$ is dual $N$-CS-Rickart.
\end{enumerate}
\end{defn}

In general, a functor between abelian categories need not preserve or reflect CS-Rickart properties, 
as we may see in the following examples.  

\begin{ex} \rm Consider the ring $R=\mathbb{Z}_2\oplus \mathbb{Z}_{16}$
and the forgetful covariant functor $F:{\rm Mod}(R)\to {\rm Mod}(\mathbb{Z})$ between module categories.
Then $F$ is an exact faithful functor. The endomorphism ring of the right $R$-module $R$ is ${\rm End}_R(R_R)\cong R$,
while the endomorphism ring of the $\mathbb{Z}$-module $(R,+)$ is 
${\rm End}_{\mathbb{Z}}(R,+)\cong \left(\begin{smallmatrix} \mathbb{Z}_2 & \mathbb{Z}_2 \\ 
\mathbb{Z}_2 & \mathbb{Z}_{16}\end{smallmatrix}\right)$. 
Since the rings ${\rm End}_R(R_R)$ and ${\rm End}_{\mathbb{Z}}(R,+)$ are not isomorphic,
it follows that the functor $F$ is not full. Following \cite[p.~64]{Lam2}, 
a ring $R$ is called right self-injective if the right $R$-module $R$ is self-injective. 
Note that $\mathbb{Z}_2$ and $\mathbb{Z}_{16}$ are right self-injective rings \cite[Corollary~3.13]{Lam2}.
Then $R=\mathbb{Z}_2\oplus \mathbb{Z}_{16}$ is also a right self-injective ring \cite[Corollary~3.11B]{Lam2}.
Hence the right $R$-module $R$ is extending \cite[Corollary~6.80]{Lam2}, and so it is self-CS-Rickart.  
But the $\mathbb{Z}$-module $F(R)=\mathbb{Z}_2\oplus \mathbb{Z}_{16}$ is not 
self-CS-Rickart \cite[Example~3.9]{CR1}. 
\end{ex}

\begin{ex} \rm For every $\mathbb{Z}$-module $M$, denote by $t(M)$ the largest torsion submodule of $M$ and 
by $d(M)$ the largest divisible (i.e., injective) submodule of $M$. Let $\A$ be the category of $\mathbb{Z}$-modules, 
and let $\B$ be the category of torsion $\mathbb{Z}$-modules. Note that both $\A$ and $\B$ are abelian categories. 
Consider the covariant functor $F:\A\to \B$ defined by $F(M)=t(d(M))$ on objects $M$ of $\A$ and accordingly on 
homomorphisms. 
Then $F$ is a left exact functor. Also, $F$ is a full, but not faithful functor \cite[Example~4.1]{CKT20}. 
Consider the $\mathbb{Z}$-module $M=\mathbb{Z}_2\oplus \mathbb{Z}_{16}\oplus \mathbb{Z}(p^{\infty})$ for some prime 
$p$. 
Then $F(M)=\mathbb{Z}(p^{\infty})$ is a torsion injective $\mathbb{Z}$-module, 
hence it is an injective object in the category $\B$. Moreover, it is well known that 
every object of $\B$ has an injective envelope, namely the injective envelope of a torsion $\mathbb{Z}$-module $A$ 
is $t(E(A))$, where $E(A)$ is the injective envelope of $A$ in $\A$. Then the injective object $F(M)$ of $\B$ 
is a self-CS-Rickart object in $\B$ \cite[Corollary~5.2]{CR1}.  
By \cite[Corollary~3.2]{CR1} and \cite[Example~3.9]{CR1}, the $\mathbb{Z}$-module $M$ is not self-CS-Rickart, 
because its direct summand $\mathbb{Z}_2\oplus \mathbb{Z}_{16}$ is not self-CS-Rickart.
\end{ex}

We continue with a result on preservation and reflection of (dual) relative CS-Rickart properties via functors.
For a covariant functor $F:\mathcal{A}\to \mathcal{B}$, we denote by ${\rm Im}(F)$ the essential image of $F$,
which consists of all objects $B$ of $\B$ such that $B\cong F(A)$ for some object $A$ of $\A$.

\begin{theo} \label{t:ffess} Let $F:\mathcal{A}\to \mathcal{B}$ be a fully faithful covariant functor between abelian 
categories.
Let $M$ and $N$ be objects of $\A$.
\begin{enumerate}[(i)]
\item Assume that ${\rm Im}(F)$ is closed under kernels or cokernels. 
\begin{enumerate}[(1)]
\item If $F$ is left exact and $N$ is $M$-CS-Rickart, then $F(N)$ is $F(M)$-CS-Rickart.
\item If $F$ is right exact and $N$ is dual $M$-CS-Rickart, then $F(N)$ is dual $F(M)$-CS-Rickart.
\end{enumerate}
\item Assume that ${\rm Im}(F)$ is closed under direct summands. 
\begin{enumerate}[(1)]
\item If $F$ is left exact and $F(N)$ is $F(M)$-CS-Rickart, then $N$ is $M$-CS-Rickart.
\item If $F$ is right exact and $F(N)$ is dual $F(M)$-CS-Rickart, then $N$ is dual $M$-CS-Rickart.
\end{enumerate}
\end{enumerate}
\end{theo}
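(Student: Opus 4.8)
The plan is to reduce everything to the two statements about essential monomorphisms, the dual statements following by the duality principle in abelian categories. Indeed, $F^{\rm op}:\A^{\rm op}\to\B^{\rm op}$ is again fully faithful, it is left exact precisely when $F$ is right exact, ${\rm Im}(F^{\rm op})$ is closed under kernels (cokernels, direct summands) precisely when ${\rm Im}(F)$ is closed under cokernels (kernels, direct summands), and an object is dual $M$-CS-Rickart in $\A$ if and only if it is $M$-CS-Rickart in $\A^{\rm op}$; hence (i)(2) and (ii)(2) follow from (i)(1) and (ii)(1) applied to $F^{\rm op}$. Throughout I would use that a fully faithful functor reflects monomorphisms, epimorphisms, sections, retractions and isomorphisms, and that it restricts to an equivalence $\A\simeq{\rm Im}(F)$. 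Note also that since $F$ is left exact and full, ${\rm Im}(F)$ is \emph{automatically} closed under kernels (any $\beta$ between objects of ${\rm Im}(F)$ is $F(\alpha)$, and ${\rm Ker}(F(\alpha))\cong F({\rm Ker}(\alpha))$); so for (i)(1) the operative instance of the hypothesis is closure under cokernels, which together with full faithfulness makes $F$ an exact equivalence onto the abelian subcategory ${\rm Im}(F)$.

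For (ii)(1), take an arbitrary morphism $f:M\to N$ in $\A$ and apply the hypothesis that $F(N)$ is $F(M)$-CS-Rickart to $F(f):F(M)\to F(N)$: there are an essential monomorphism $e':{\rm Ker}(F(f))\to L'$ and a section $s':L'\to F(M)$ with ${\rm ker}(F(f))=s'e'$. Left exactness gives ${\rm Ker}(F(f))\cong F({\rm Ker}(f))$, and since $L'$ is a direct summand of $F(M)$ the closure of ${\rm Im}(F)$ under direct summands yields $L'\cong F(L)$ for some object $L$ of $\A$. Fullness then writes $e'=F(e)$ and $s'=F(s)$, faithfulness shows $s$ is a section and transports the equality ${\rm ker}(f)=se$, and finally $F$ reflects essential monomorphisms: if $h:L\to P$ in $\A$ has $he$ monic, then $F(h)F(e)$ is monic, so $F(h)$ is monic by essentiality of $F(e)$, whence $h$ is monic as $F$ reflects monomorphisms. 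Thus $e$ is essential, ${\rm Ker}(f)$ is essential in a direct summand of $M$, and $N$ is $M$-CS-Rickart. Here the only nonformal input is closure under direct summands, used solely to realise the summand $L'$ inside ${\rm Im}(F)$; the reflection of essential monomorphisms is automatic.

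For (i)(1), any $g:F(M)\to F(N)$ equals $F(f)$ by fullness; applying $M$-CS-Rickart-ness of $N$ to $f$ produces an essential monomorphism $e:{\rm Ker}(f)\to L$ and a section $s:L\to M$ with ${\rm ker}(f)=se$. Applying $F$ and using left exactness identifies ${\rm ker}(g)$ with $F(s)F(e)$, where $F(s)$ is a section; it remains to prove that $F(e):F({\rm Ker}(f))\to F(L)$ is \emph{essential} in $\B$. This preservation of essential monomorphisms is the main obstacle, and it is exactly where the cokernel-closure hypothesis enters: testing essentiality against epimorphisms $h:F(L)\to Q$ as permitted by Remark~\ref{r:es}, one forms ${\rm coker}(F(e))$, which by closure of ${\rm Im}(F)$ under cokernels lies in ${\rm Im}(F)$ and, $F$ being an exact equivalence onto its image, coincides with $F({\rm Coker}(e))$. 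The delicate step—the one I expect to require the most care—is to control the kernel of such an $h$, a subobject of $F(L)$ that a priori need not lie in ${\rm Im}(F)$, so that the essentiality test in $\B$ genuinely reduces, via fullness, to the known essentiality of $e$ in $\A$. Once $F(e)$ is shown to be essential, ${\rm Ker}(g)$ is essential in the direct summand ${\rm Im}(F(s))$ of $F(M)$, so $F(N)$ is $F(M)$-CS-Rickart.
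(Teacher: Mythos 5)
Your reduction of (i)(2) and (ii)(2) to (i)(1) and (ii)(1) by duality is fine (the paper relies on the same principle), and your argument for (ii)(1) is complete and essentially identical to the paper's: realize the summand as $F(Q)$ via closure under direct summands, lift $e'$ and $s'$ through the fully faithful $F$, and use that such an $F$ reflects essential monomorphisms. The genuine gap is in (i)(1), and it has two sources. First, your claim that ``${\rm Im}(F)$ is automatically closed under kernels'' rests on reading that hypothesis as closure under kernels of morphisms \emph{between} objects of ${\rm Im}(F)$; that is indeed automatic for a full left exact functor, but precisely for that reason it is not what the theorem means. In the paper's proof the essentiality of $F(e):F(K)\to F(Q)$ is tested against a morphism $h':F(Q)\to Z$ whose codomain $Z$ is an \emph{arbitrary} object of $\B$, and ``closed under kernels'' is used to mean that ${\rm Ker}(h')$ lies in ${\rm Im}(F)$ for every such $h'$; dually, ``closed under cokernels'' means that every epimorphic image of an object of ${\rm Im}(F)$ lies in ${\rm Im}(F)$. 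Neither condition is vacuous, and both branches of case (i) require an actual argument.

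Second, and decisively: you never prove that $F(e)$ is essential. You correctly isolate this as the main obstacle, but then only describe the difficulty (``control the kernel of such an $h$'') and write ``once $F(e)$ is shown to be essential\dots''; since that \emph{is} the entire content of (i)(1), the proof is incomplete. Under the correct reading of cokernel-closure the step you are stuck on is short and needs no control of ${\rm Ker}(h)$ at all: by Remark~\ref{r:es} it suffices to test essentiality against \emph{epimorphisms} $h':F(Q)\to L'$; then $L'$ is an epimorphic image of $F(Q)$, hence $L'\cong F(L)$ by hypothesis, and $h'=F(h)$ by fullness; now $h'F(e)=F(he)$ monic forces $he$ monic (faithfulness), hence $h$ monic (essentiality of $e$), hence $h'=F(h)$ monic (left exactness). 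The kernel-closure branch, which you discarded as vacuous, needs a different argument: the paper takes $u'={\rm ker}(h')$, uses the (strong) kernel hypothesis and full faithfulness to write ${\rm Ker}(h')\cong F(A)$ and $u'=F(u)$ for a monomorphism $u:A\to Q$, sets $v={\rm coker}(u)$, and uses the fact that a faithful functor reflects pullbacks to conclude ${\rm Ker}(ve)=0$; then $ve$ is monic, so $v$ is monic by essentiality of $e$, whence $A=0$, ${\rm Ker}(h')=0$, and $h'$ is monic.
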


\begin{proof} (i) (1) Let $g:F(M)\to F(N)$ be a morphism in $\B$. We have 
$g=F(f)$ for some morphism $f:M\to N$ in $\A$, because $F$ is full. Since $N$ is $M$-CS-Rickart, 
there are an essential monomorphism $e:{\rm Ker}(f)\to Q$ 
and a section $s:Q\to M$ in $\mathcal{A}$ such that ${\rm ker}(f)=se$. 
Since $F$ is left exact we have ${\rm ker}(g)=F({\rm ker}(f))=F(s)F(e)$, and then $F(s)$ is a section, and $F(e)$ is a monomorphism.  
We claim that $F(e)$ is an essential monomorphism. To this end, denote $K={\rm Ker}(f)$,
and let $h':F(Q)\to Z$ be a morphism in $\B$ such that $h'F(e)$ is a monomorphism.

Assume first that ${\rm Im}(F)$ is closed under kernels. Let $u':{\rm Ker}(h')\to F(Q)$ be the inclusion morphism.
Now we have the following commutative diagram with exact rows:
$$\SelectTips{cm}{}
\xymatrix{
0 \ar[r] \ar[d] & F(K) \ar[rr]^-{h'F(e)} \ar[d]^{F(e)} & & Z \ar@{=}[d] \\
{\rm Ker}(h') \ar[r]_-{u'} & F(Q) \ar[rr]_-{h'} & & Z
}
$$
in which the left square is a pullback. We have ${\rm Ker}(h')\cong F(A)$ for some object $A$ of $\A$, 
and so $u'=F(u)$ for some monomorphism $u:A\to Q$ in $\A$, because $F$ is fully faithful. 
Let $v={\rm coker}(u):Q\to X$. The faithful functor $F$ reflects pullbacks (e.g., see \cite[Chapter~2, Theorem~7.1]{M}),
hence we obtain the following commutative diagram with exact rows:
$$\SelectTips{cm}{}
\xymatrix{
0 \ar[r] \ar[d] & K \ar[r]^-{ve} \ar[d]^{e} & X \ar@{=}[d] \\
A \ar[r]_-{u} & Q \ar[r]_-{v} & X
}
$$
in which the left square is a pullback. Since ${\rm Ker}(ve)=0$, $ve$ is a monomorphism. 
Then $v$ is a monomorphism, because $e$ is an essential monomorphism. Hence $A=0$, 
and so ${\rm Ker}(h')=0$. Then $h'$ is a monomorphism, which shows that $F(e)$ is an essential monomorphism.

Assume now that ${\rm Im}(F)$ is closed under cokernels (epimorphisms). By Remark \ref{r:es}  
we may take $h':F(Q)\to L'$ to be an epimorphism. Now $L'\cong F(L)$ for some object $L$ of $\A$. 
Since $F$ is full, we have $h'=F(h)$ for some morphism $h:Q\to L$ in $\A$. 
Then $F(he)$ is a monomorphism in $\B$, and so $he$ is a monomorphism in $\A$, because $F$ is faithful. Since $e$ is an essential monomorphism, $h$ is a monomorphism. 
It follows that $h'=F(h)$ is a monomorphism since $F$ is left exact. This shows that $F(e)$ is an essential monomorphism.

(ii) (1) Let $f:M\to N$ be a morphism in $\A$. Consider the morphism $F(f):F(M)\to F(N)$ in $\B$. 
Since $F(N)$ is $F(M)$-CS-Rickart, there are an essential monomorphism $e':{\rm Ker}(F(f))\to Q'$ 
and a section $s':Q'\to F(M)$ in $\mathcal{B}$ such that ${\rm ker}(F(f))=s'e'$. 
Since ${\rm Im}(F)$ is closed under direct summands, we have $Q'\cong F(Q)$ for some object $Q$ of $\A$. 
Since $F$ is left exact and full, it follows that $F({\rm ker}(f))=F(s)F(e)$ for some mophisms $e:{\rm Ker}(f)\to Q$ 
and $s:Q\to M$ in $\A$. But $F$ is also faithful, hence we have ${\rm ker}(f)=se$, where $s$ is a section. 
We claim that $e$ is an essential monomorphism. 
To this end, let $h:Q\to L$ be a morphism in $\A$ such that $he$ is a monomorphism. 
Since $F$ is left exact, $F(h)F(e)$ is a monomorphism. But $e'=F(e)$ is an essential monomorphism,
hence $F(h)$ is a monomorphism. Since $F$ is faithful, it follows that 
$h$ is a monomorphism, which shows that $e$ is an essential monomorphism.
Hence $N$ is $M$-CS-Rickart.
\end{proof}

The following result is immediate.

\begin{coll} Let $F:\mathcal{A}\to \mathcal{B}$ be an equivalence of abelian categories,
and let $M$, $N$ be objects of $\A$. Then $N$ is (dual) $M$-CS-Rickart if and only if $F(N)$ is (dual) 
$F(M)$-CS-Rickart. 
\end{coll}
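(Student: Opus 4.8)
The plan is to deduce this directly from Theorem \ref{t:ffess}, by verifying that an equivalence satisfies all of that theorem's hypotheses at once. First I would note that an equivalence $F$ is in particular fully faithful, so the standing assumption of the theorem is in place. Then I would record the two extra properties that come for free from $F$ being an equivalence: essential surjectivity and exactness.

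For essential surjectivity, every object of $\B$ is isomorphic to some $F(A)$, hence $\mathrm{Im}(F)=\B$ up to isomorphism; in particular $\mathrm{Im}(F)$ is trivially closed under kernels, cokernels, and direct summands, so the closure hypotheses of both part (i) and part (ii) hold automatically. For exactness, I would recall that an equivalence of abelian categories admits a quasi-inverse that is simultaneously a left and a right adjoint of $F$; consequently $F$ preserves all limits and colimits and is therefore both left exact and right exact. This makes $F$ eligible for the left-exact clauses (i)(1) and (ii)(1) as well as for the right-exact clauses (i)(2) and (ii)(2).

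With these observations the four implications follow immediately: the forward direction of the CS-Rickart equivalence is (i)(1) and its converse is (ii)(1), while the forward and converse directions of the dual CS-Rickart equivalence are (i)(2) and (ii)(2) respectively. The only point that merits a word of justification is the exactness of $F$, which I would supply via the two-sided adjunction with its quasi-inverse; beyond that I anticipate no real obstacle, which is precisely why the statement is flagged as immediate.
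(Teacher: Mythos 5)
Your proposal is correct and follows exactly the route the paper intends: the paper states this corollary as ``immediate'' from Theorem~\ref{t:ffess}, and your verification that an equivalence is fully faithful, exact (via the two-sided adjunction with a quasi-inverse), and has essential image all of $\B$ (hence trivially closed under kernels, cokernels, and direct summands) is precisely the justification being left to the reader. All four clauses of Theorem~\ref{t:ffess} then apply as you describe, giving both directions of both equivalences.
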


\section{Transfer of (dual) CS-Rickart properties via adjoint functors}

Next we show that (dual) relative CS-Rickart properties transfer via adjoint functors under reasonable conditions. 
Let $(L,R)$ be an adjoint pair of covariant functors $L:\mathcal{A}\to \mathcal{B}$ and $R:\mathcal{B}\to \mathcal{A}$
between abelian categories. Then $L$ is right exact and $R$ is left exact. 
Denote by $\varepsilon:LR\to 1_{\mathcal{B}}$ and $\eta:1_{\mathcal{A}}\to RL$ the counit and
the unit of adjunction respectively. Following  \cite{CGW}, denote by ${\rm Stat}(R)$ the full subcategory of 
$\mathcal{B}$ 
consisting of \emph{$R$-static} objects, that is, objects $B$ of $\mathcal{B}$ such that $\varepsilon_B$ is an 
isomorphism. 
Also, denote by ${\rm Adst}(R)$ the full subcategory of $\mathcal{A}$ consisting of \emph{$R$-adstatic} objects,
that is, objects $A$ of $\mathcal{A}$ such that $\eta_A$ is an isomorphism. 

\begin{theo}\label{t:t4} Let $\mathcal{A}$ and $\mathcal{B}$ be abelian categories, 
and let $(L,R)$ be an adjoint pair of covariant functors $L:\mathcal{A}\to \mathcal{B}$ and $R:\mathcal{B}\to 
\mathcal{A}$. 
\begin{enumerate}
\item Assume that $L$ is exact, and let $M, N\in {\rm Stat}(R)$. 
\begin{enumerate}[(i)]
\item If $N$ is $M$-CS-Rickart in $\mathcal{B}$, then $R(N)$ is $R(M)$-CS-Rickart in $\mathcal{A}$. 
\item If $R$ reflects zero objects (in particular, if $R$ is faithful) and $R(N)$ is $R(M)$-CS-Rickart in 
$\mathcal{A}$, 
then $N$ is $M$-CS-Rickart in $\mathcal{B}$.
\end{enumerate}
\item Assume that $R$ is exact, and let $M, N\in {\rm Adst}(R)$. 
\begin{enumerate}[(i)]
\item If $N$ is dual $M$-CS-Rickart in $\mathcal{A}$, then $L(N)$ is dual $L(M)$-CS-Rickart in $\mathcal{B}$. 
\item If $L$ reflects zero objects (in particular, if $L$ is faithful) and $L(N)$ is dual $L(M)$-CS-Rickart in 
$\mathcal{B}$, 
then $N$ is dual $M$-CS-Rickart in $\mathcal{A}$.
\end{enumerate}
\end{enumerate}
\end{theo}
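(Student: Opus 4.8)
The plan is to prove part (1) directly and to deduce part (2) from it by the duality principle for abelian categories. Passing to the opposite categories turns the adjoint pair $(L,R)$ into the adjoint pair $(R^{\mathrm{op}},L^{\mathrm{op}})$ with $R^{\mathrm{op}}$ now the left adjoint, and it interchanges the unit and counit, hence ${\rm Stat}(R)$ with ${\rm Adst}(R)$, and the relative CS-Rickart property with its dual. Under this dictionary the hypothesis ``$R$ exact'' of (2) reads as ``the left adjoint $R^{\mathrm{op}}$ is exact'', ``$L$ reflects zero objects'' reads as ``the right adjoint $L^{\mathrm{op}}$ reflects zero objects'', and (2)(i), (2)(ii) become precisely (1)(i), (1)(ii) applied to the opposite pair. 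So it suffices to prove (1).

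Before proving (1) I would record three facts, assuming $L$ exact and $M,N\in{\rm Stat}(R)$. First, since $L$, $R$ and $\varepsilon$ are additive and compatible with biproducts, ${\rm Stat}(R)$ is closed under direct summands: a block-diagonal isomorphism $\varepsilon_M=\varepsilon_{Q}\oplus\varepsilon_{Q'}$ forces each summand to be an isomorphism. Second, for any $f\colon M\to N$ the kernel $K={\rm Ker}(f)$ is again $R$-static: because $R$ is left exact and $L$ is exact, $LR$ preserves the kernel sequence $0\to K\to M\xrightarrow{f}N$, so the naturality square of $\varepsilon$ together with $\varepsilon_M,\varepsilon_N$ being isomorphisms forces the induced map $\varepsilon_K$ on kernels to be an isomorphism. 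Third, the triangle identities give, for $M,N$ static, a bijection $f\mapsto R(f)$ from $\Hom_{\mathcal{B}}(M,N)$ onto $\Hom_{\mathcal{A}}(R(M),R(N))$ with inverse $g\mapsto\varepsilon_N L(g)\varepsilon_M^{-1}$; hence every $g\colon R(M)\to R(N)$ equals $R(f)$ for some $f$, and left exactness of $R$ yields ${\rm ker}(g)=R({\rm ker}(f))$.

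For (1)(i) I would start from $g=R(f)$. If $N$ is $M$-CS-Rickart, pick an essential monomorphism $e\colon K\to Q$ and a section $s\colon Q\to M$ with ${\rm ker}(f)=se$; applying $R$ gives ${\rm ker}(g)=R(s)R(e)$ with $R(s)$ a section, so $R(Q)$ is a direct summand of $R(M)$ and the only nontrivial point is that $R(e)$ is essential. Here $Q$ is static (a summand of $M$) and $K$ is static, so $\varepsilon_Q$, $\varepsilon_K$ and, by the triangle identity, $\eta_{R(Q)}=R(\varepsilon_Q)^{-1}$ are isomorphisms. Given an epimorphism $h\colon R(Q)\to P$ with $hR(e)$ monic, I would apply the exact functor $L$: from $\varepsilon_Q LR(e)=e\,\varepsilon_K$ and essentiality of $e$ one gets that $L(h)$ is monic, hence an isomorphism (being also epic). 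Then naturality of $\eta$ gives $\eta_P\,h=RL(h)\,\eta_{R(Q)}$, a composite of isomorphisms, so $\eta_P\,h$ is monic and therefore $h$ is monic. Thus $R(e)$ is essential and $R(N)$ is $R(M)$-CS-Rickart.

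For (1)(ii) I would run the transfer through $L$. Given $f\colon M\to N$, set $K={\rm Ker}(f)$, which is static, and note $R(K)={\rm Ker}(R(f))$. Choosing an essential monomorphism $e'\colon R(K)\to Q'$ and a section $s'\colon Q'\to R(M)$ with ${\rm ker}(R(f))=s'e'$, the morphisms $\bar s=\varepsilon_M L(s')$ and $\bar e=L(e')\varepsilon_K^{-1}$ satisfy ${\rm ker}(f)=\bar s\,\bar e$ with $\bar s$ a section, so it remains to show $L(e')$ is essential. Now $R(M)$ is $R$-adstatic, since $\eta_{R(M)}=R(\varepsilon_M)^{-1}$, and ${\rm Adst}(R)$ is closed under direct summands, so $Q'$ is adstatic; and $R(K)$ is adstatic as $\eta_{R(K)}=R(\varepsilon_K)^{-1}$. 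Thus $\eta_{Q'}$ and $\eta_{R(K)}$ are isomorphisms. Given an epimorphism $h\colon L(Q')\to Z$ with $hL(e')$ monic, apply the left exact $R$: from $\eta_{Q'}e'=RL(e')\,\eta_{R(K)}$ and essentiality of $e'$ one obtains that $R(h)$ is monic. The final descent is the crux of the theorem: from $R(h)$ monic and $R$ left exact we get $R({\rm Ker}(h))={\rm Ker}(R(h))=0$, and here the hypothesis that $R$ reflects zero objects forces ${\rm Ker}(h)=0$, so $h$ is monic. Hence $L(e')$ is essential and $N$ is $M$-CS-Rickart. The main obstacle is precisely this preservation of essentiality: sections and direct summands transfer formally, whereas concluding that the test morphism $h$ is monic is delicate, being automatic from exactness of $L$ in (i) but genuinely requiring the reflect-zeros hypothesis in (ii).
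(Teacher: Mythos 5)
Your proposal is correct and follows essentially the same route as the paper's own proof: part (1) is handled by transporting morphisms through the counit/unit isomorphisms on static objects (your Hom-bijection is exactly the paper's computation $f=R(g)$ with $g=\varepsilon_N L(f)\varepsilon_M^{-1}$), then showing $Q$ and the kernel are static (adstatic) so that the transported monomorphism is essential, and finishing via exactness of $L$ in (i) and the reflects-zero-objects hypothesis in (ii), with part (2) following by the duality principle, just as the paper leaves implicit. The only differences are organizational --- you isolate the closure-under-summands and kernel-static facts as preliminary lemmas and restrict the essentiality tests to epimorphisms via Remark \ref{r:es}, neither of which changes the substance of the argument.
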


\begin{proof} (1) Denote by $\varepsilon:LR\to 1_{\mathcal{B}}$ and $\eta:1_{\mathcal{A}}\to RL$ the counit and
the unit of adjunction respectively. Then $R(\varepsilon_B)\eta_{R(B)}=1_{R(B)}$ for every object $B$ of $\mathcal{B}$,
and $\varepsilon_{L(A)}L(\eta_A)=1_{L(A)}$ for every object $A$ of $\mathcal{A}$. 

(i) Assume that $N$ is $M$-CS-Rickart in $\mathcal{B}$. 
Let $f:R(M)\to R(N)$ be a morphism in $\A$ with kernel $k={\rm ker}(f):K\to R(M)$.
Consider the morphism $g=\varepsilon_NL(f)\varepsilon_M^{-1}:M\to N$ in $\B$. 
Then there are an essential monomorphism $e:{\rm Ker}(g)\to Q$ and a section $s:Q\to M$ in $\mathcal{B}$ 
such that ${\rm ker}(g)=se$. Note that ${\rm Ker}(g)=L(K)$. 
There is a morphism $r:M\to Q$ such that $rs=1_Q$. Then $LR(s)$ is a section. 
By naturality we have the following commutative diagram in $\mathcal{A}$:
$$\SelectTips{cm}{}
\xymatrix{
 R(M) \ar[d]_{\eta_{R(M)}} \ar[r]^f & R(N) \ar[d]^{\eta_{R(N)}} \\
 RLR(M) \ar[r]_{RL(f)} & RLR(N) 
}
$$ 
Since $M,N\in {\rm Stat}(R)$, $R(\varepsilon_M)$ and $R(\varepsilon_N)$ are isomorphisms,
and so $\eta_{R(M)}=R(\varepsilon_M)^{-1}$ and $\eta_{R(N)}=R(\varepsilon_N)^{-1}$ are isomorphisms. It follows that:
$$f=\eta_{R(N)}^{-1}RL(f)\eta_{R(M)}=R(\varepsilon_N)RL(f)R(\varepsilon_M^{-1})=R(\varepsilon_N L(f) 
\varepsilon_M^{-1})=R(g).$$
Hence $k={\rm ker}(f)=R({\rm ker}(g))=R(s)R(e)$. Clearly, $R(s)$ is a section and $R(e)$ is a monomorphism. 
In order to finish the proof, it is enough to show that the monomorphism $R(e)$ is essential.

We first claim that the monomorphism $LR(e):LRL(K)\to LR(Q)$ is essential. 
By naturality we have the following commutative diagram in $\mathcal{B}$:
$$\SelectTips{cm}{}
\xymatrix{
LR(M) \ar[d]_{\varepsilon_M} \ar[r]^{LR(r)} & LR(Q) \ar[d]^{\varepsilon_Q} \ar[r]^{LR(s)} & LR(M) \ar[d]^{\varepsilon_M} 
\\
M \ar[r]_r & Q \ar[r]_s & M
}
$$
Since $\varepsilon_Q LR(r)=r\varepsilon_M$ is an epimorphism and 
$s\varepsilon_Q=\varepsilon_MLR(s)$ is a monomorphism, $\varepsilon_Q$ is an isomorphism, hence $Q\in {\rm Stat}(R)$. 
By naturality we also have the following commutative diagram in $\mathcal{B}$:
$$\SelectTips{cm}{}
\xymatrix{
LRL(K) \ar[d]_{\varepsilon_{L(K)}} \ar[r]^-{LR(e)} & LR(Q) \ar[d]^{\varepsilon_Q} \\
L(K) \ar[r]_e & Q 
}
$$
Then $e\varepsilon_{L(K)}=\varepsilon_Q LR(e)$ is a monomorphism,
hence so is $\varepsilon_{L(K)}$. But the equality $\varepsilon_{L(K)}L(\eta_K)=1_{L(K)}$ shows that  
$\varepsilon_{L(K)}$ is also a retraction, and so $\varepsilon_{L(K)}$ is an isomorphism.
Since $\varepsilon_Q LR(e)=e\varepsilon_{L(K)}$ is an essential monomorphism, so is $LR(e)$,
because a composition of monomorphisms is essential if and only if each of them is essential.

Now we claim that the monomorphism $R(e):RL(K)\to R(Q)$ is essential. 
To this end, let $h:R(Q)\to P$ be a morphism in $\mathcal{A}$ such that $hR(e)$ is a monomorphism. The functor $L$ being exact, $L(h)LR(e)$ is a monomorphism, and so is $L(h)$, 
because $LR(e)$ is an essential monomorphism. Thus $RL(h)$ is a monomorphism.
By naturality we have the following commutative diagram in $\mathcal{A}$:
$$\SelectTips{cm}{}
\xymatrix{
 R(Q) \ar[d]_{\eta_{R(Q)}} \ar[r]^h &P \ar[d]^{\eta_P} \\
 RLR(Q) \ar[r]_-{RL(h)} & RL(P) 
}
$$ 
Note that $\eta_{R(Q)}$ is an isomorphism, because so is $R(\varepsilon_Q)$. 
Since $\eta_Ph=RL(h)\eta_{R(Q)}$ is a monomorphism, so is $h$.  
This shows that $R(e)$ is an essential monomorphism. Thus $R(N)$ is $R(M)$-CS-Rickart.

(ii) Assume that $R$ reflects zero objects and $R(N)$ is $R(M)$-CS-Rickart in $\mathcal{A}$. 
Let $f:M\to N$ be a morphism in $\mathcal{B}$ with kernel 
$k={\rm ker}(f):K\to M$. Consider the morphism $R(f):R(M)\to R(N)$ in $\A$. 
Then there are an essential monomorphism $e:{\rm Ker}(R(f))\to Q$ and a section $s:Q\to R(M)$ in $\mathcal{A}$ 
such that ${\rm ker}(R(f))=se$. 

By naturality we have the following commutative diagram in $\mathcal{B}$:
$$\SelectTips{cm}{}
\xymatrix{
 LR(M) \ar[d]_{\varepsilon_M} \ar[r]^-{LR(f)} & LR(N) \ar[d]^{\varepsilon_N} \\
 M \ar[r]_f & N 
}
$$ 
in which $\varepsilon_M$ and $\varepsilon_N$ are isomorphisms. 
Since $L$ is exact, it follows that $${\rm ker}(f)={\rm ker}(LR(f))=L({\rm ker}R(f))=L(s)L(e).$$ 
Clearly, $L(s)$ is a section and $L(e)$ is a monomorphism, because $L$ is exact. 
In order to finish the proof, it is enough to show that the monomorphism $L(e)$ is essential.

We first claim that the monomorphism $RL(e):RLR(K)\to RL(Q)$ is essential. 
There is a morphism $r:R(M)\to Q$ such that $rs=1_Q$. Then $RL(s)$ is a section and $RL(r)$ is a retraction.
By naturality we have the following commutative diagram in $\mathcal{A}$:
$$\SelectTips{cm}{}
\xymatrix{
R(M) \ar[d]_{\eta_{R(M)}} \ar[r]^-r & Q \ar[d]^{\eta_Q} \ar[r]^-s & R(M) \ar[d]^{\eta_{R(M)}} \\
RLR(M) \ar[r]_-{RL(r)} & RL(Q) \ar[r]_-{RL(s)} & RLR(M)
}
$$
Since $M\in {\rm Stat}(R)$, $R(\varepsilon_M)$ is an isomorphism,
and so $\eta_{R(M)}=R(\varepsilon_M)^{-1}$ is an isomorphism. 
Since $\eta_Qr=RL(r)\eta_{R(M)}$ is an epimorphism and $RL(s)\eta_Q=\eta_{R(M)}s$ is a monomorphism,
$\eta_Q$ is an isomorphism, hence $Q\in {\rm Adst}(R)$.
By naturality we have the following commutative diagram in $\mathcal{A}$:
$$\SelectTips{cm}{}
\xymatrix{
 R(K) \ar[d]_{\eta_{R(K)}} \ar[r]^-e & Q \ar[d]^{\eta_Q} \\
 RLR(K) \ar[r]_-{RL(e)} & RL(Q) 
}
$$ 
in which $\eta_Q$ is an isomorphism and $\eta_{R(K)}$ is a section. 
Since $RL(e)\eta_{R(K)}=\eta_Q e$ is an essential monomorphism, so is $RL(e)$, because a composition of monomorphisms 
is essential if and only if each of them is essential.

Now we claim that the monomorphism $L(e):LR(K)\to L(Q)$ is essential. 
To this end, let $h:L(Q)\to P$ be a morphism in $\mathcal{B}$ such that $hL(e)$ is a monomorphism. 
Then $R(h)RL(e)$ is a monomorphism, and so $R(h)$ is a monomorphism, 
because $RL(e)$ is an essential monomorphism. We have $R({\rm Ker}(h))={\rm Ker}(R(h))=0$.  
Since $R$ reflects zero objects, it follows that ${\rm Ker}(h)=0$, and so $h$ is a monomorphism. 
This shows that $L(e)$ is an essential monomorphism. Thus $N$ is $M$-CS-Rickart.
\end{proof}

We also give the contravariant version of Theorem \ref{t:t4}, because it will be used later on. 
Let $(L,R)$ be a \emph{right adjoint pair} of contravariant functors $L:\mathcal{A}\to
\mathcal{B}$ and $R:\mathcal{B}\to \mathcal{A}$ between abelian categories \cite[45.2]{Wis}. This means that 
for every objects $A$ of $\mathcal{A}$ and $B$ of $\mathcal{B}$ there is a
bijection $$\Hom_{\mathcal{B}}(B,L(A))\cong \Hom_{\mathcal{A}}(A,R(B)),$$ natural in each variable.  
Then both $L$ and $R$ are left exact, so they turn cokernels into kernels. 
Let $\varepsilon:1_{\mathcal{B}}\to LR$ and $\eta:1_{\mathcal{A}}\to RL$ be the counit and the unit of adjunction 
respectively. 
Following \cite{Castano}, denote by ${\rm Refl}(R)$ the full subcategory of $\mathcal{B}$ consisting of 
\emph{$R$-reflexive} objects, that is, objects $B$ of $\mathcal{B}$ such that $\varepsilon_B$ is an isomorphism. 
Also, denote by ${\rm Refl}(L)$ the full subcategory of $\mathcal{A}$ consisting of \emph{$L$-reflexive} objects,
that is, objects $A$ of $\mathcal{A}$ such that $\eta_A$ is an isomorphism. 
Similarly, one may consider a \emph{left adjoint pair} of contravariant functors $L:\mathcal{A}\to
\mathcal{B}$ and $R:\mathcal{B}\to \mathcal{A}$ between abelian categories \cite[45.2]{Wis}.  
This means that for every objects $A$ of $\mathcal{A}$ and $B$ of $\mathcal{B}$ there is a
bijection $$\Hom_{\mathcal{B}}(L(A),B)\cong \Hom_{\mathcal{A}}(R(B),A),$$ natural in each variable.  
Then both $L$ and $R$ are right exact, so they turn kernels into cokernels. 

\begin{theo} \label{t:t4contrav} Let $\mathcal{A}$ and $\mathcal{B}$ be abelian categories, 
and let $(L,R)$ be a pair of contravariant functors $L:\mathcal{A}\to \mathcal{B}$ and $R:\mathcal{B}\to \mathcal{A}$. 
\begin{enumerate}
\item Assume that $(L,R)$ is left adjoint and $L$ is exact, and let $M, N\in {\rm Refl}(R)$. 
\begin{enumerate}[(i)]
\item If $N$ is $M$-CS-Rickart in $\mathcal{B}$, then $R(M)$ is dual $R(N)$-CS-Rickart in $\mathcal{A}$. 
\item If $R$ reflects zero objects (in particular, if $R$ is faithful) and $R(M)$ is dual $R(N)$-CS-Rickart in 
$\mathcal{A}$, 
then $N$ is $M$-CS-Rickart in $\mathcal{B}$.
\end{enumerate}
\item Assume that $(L,R)$ is right adjoint and $R$ is exact, and let $M, N\in {\rm Refl}(L)$. 
\begin{enumerate}[(i)]
\item If $N$ is dual $M$-CS-Rickart in $\mathcal{A}$, then $L(M)$ is $L(N)$-CS-Rickart in $\mathcal{B}$. 
\item If $L$ reflects zero objects (in particular, if $L$ is faithful) and $L(M)$ is $L(N)$-CS-Rickart in 
$\mathcal{B}$, 
then $N$ is dual $M$-CS-Rickart in $\mathcal{A}$.
\end{enumerate}
\end{enumerate}
\end{theo}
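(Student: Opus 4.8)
The plan is to deduce both parts from the covariant Theorem~\ref{t:t4} by passing to opposite categories and invoking the duality principle in abelian categories, which interchanges the CS-Rickart and dual CS-Rickart properties. The key observation is that each contravariant adjoint pair may be reinterpreted as a covariant adjoint pair between suitably opposited categories, and that exactness, reflexivity/staticity, and the property of reflecting zero objects are all preserved under this reinterpretation. Once the translation is fixed, each assertion becomes a direct application of the matching item of Theorem~\ref{t:t4}.

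For part (1), assume $(L,R)$ is a left adjoint pair, so that $\Hom_{\B}(L(A),B)\cong \Hom_{\A}(R(B),A)$ naturally. I would regard $L$ as a covariant functor $\overline{L}:\A^{\rm op}\to \B$ and $R$ as a covariant functor $\overline{R}:\B\to \A^{\rm op}$. The natural isomorphism then reads $\Hom_{\B}(\overline{L}(A),B)\cong \Hom_{\A^{\rm op}}(A,\overline{R}(B))$, so that $(\overline{L},\overline{R})$ is a covariant adjoint pair with $\overline{L}$ left adjoint and $\overline{R}$ right adjoint. Its counit has components $\varepsilon_B:LR(B)\to B$, so $M,N\in {\rm Refl}(R)$ becomes $M,N\in {\rm Stat}(\overline{R})$; moreover $L$ exact becomes $\overline{L}$ exact, and $R$ reflecting zero objects becomes $\overline{R}$ reflecting zero objects (zero objects being self-dual). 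Since $\B$ is not opposited, the hypothesis that $N$ is $M$-CS-Rickart in $\B$ is unchanged, and Theorem~\ref{t:t4}(1)(i) yields that $\overline{R}(N)$ is $\overline{R}(M)$-CS-Rickart in $\A^{\rm op}$. Finally I would translate this conclusion back to $\A$: a morphism $\overline{R}(M)\to \overline{R}(N)$ in $\A^{\rm op}$ is a morphism $R(N)\to R(M)$ in $\A$, kernels become cokernels, essential monomorphisms become superfluous epimorphisms, and sections become retractions, so that ``$\overline{R}(N)$ is $\overline{R}(M)$-CS-Rickart in $\A^{\rm op}$'' is precisely ``$R(M)$ is dual $R(N)$-CS-Rickart in $\A$''. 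This gives (i), and (ii) follows in the same way from Theorem~\ref{t:t4}(1)(ii).

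For part (2), assume $(L,R)$ is a right adjoint pair, so that $\Hom_{\B}(B,L(A))\cong \Hom_{\A}(A,R(B))$ naturally. Here I would instead opposite $\B$, regarding $L$ as a covariant functor $\overline{L}:\A\to \B^{\rm op}$ and $R$ as a covariant functor $\overline{R}:\B^{\rm op}\to \A$. Rewriting $\Hom_{\B}(B,L(A))=\Hom_{\B^{\rm op}}(\overline{L}(A),B)$ shows that $(\overline{L},\overline{R})$ is a covariant adjoint pair with $\overline{L}$ left adjoint and $\overline{R}$ right adjoint, whose unit has components $\eta_A:A\to RL(A)$; thus $M,N\in {\rm Refl}(L)$ becomes $M,N\in {\rm Adst}(\overline{R})$, while $R$ exact becomes $\overline{R}$ exact and $L$ reflecting zero objects becomes $\overline{L}$ reflecting zero objects. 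Since the right adjoint $\overline{R}$ is exact, I would apply Theorem~\ref{t:t4}(2): the hypothesis that $N$ is dual $M$-CS-Rickart in $\A$ is unchanged, and the conclusion is that $\overline{L}(N)$ is dual $\overline{L}(M)$-CS-Rickart in $\B^{\rm op}$. Dualizing back to $\B$ exactly as before turns this into ``$L(M)$ is $L(N)$-CS-Rickart in $\B$'', giving (i); part (ii) follows from Theorem~\ref{t:t4}(2)(ii).

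The routine verifications — that a contravariant exact functor becomes a covariant exact functor on the opposite category, that the units and counits match up, and that reflecting zero objects is self-dual — are straightforward. The main point requiring care is the bookkeeping of the duality dictionary, and in particular the swap of the two objects in the relative property: the conclusion concerns $R(M)$ being dual $R(N)$-CS-Rickart (respectively $L(M)$ being $L(N)$-CS-Rickart), with $M$ and $N$ interchanged relative to the hypothesis. This swap reflects the reversal of the direction of morphisms when passing to the opposite category, and once the dictionary is fixed each assertion reduces immediately to the corresponding item of Theorem~\ref{t:t4}.
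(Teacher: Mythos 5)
Your proof is correct, and it is essentially the argument the paper intends: Theorem \ref{t:t4contrav} is stated without proof as the contravariant version of Theorem \ref{t:t4}, leaving precisely this dualization implicit. Your translation — oppositing $\A$ in part (1) and $\B$ in part (2), identifying ${\rm Refl}(R)$ with ${\rm Stat}(\overline{R})$ and ${\rm Refl}(L)$ with ${\rm Adst}(\overline{R})$, and tracking the swap of $M$ and $N$ under the duality dictionary — correctly supplies the omitted routine verification.
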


The main consequence of Theorem \ref{t:t4} is the following result.

\begin{theo} \label{t:ff} Let $\mathcal{A}$ and $\mathcal{B}$ be abelian categories, 
and let $(L,R)$ be an adjoint pair of covariant functors $L:\mathcal{A}\to \mathcal{B}$ and $R:\mathcal{B}\to 
\mathcal{A}$. 
\begin{enumerate}
\item Assume that $L$ is exact and $R$ is fully faithful. Let $M$ and $N$ be objects of $\mathcal{B}$. 
Then $N$ is $M$-CS-Rickart in $\mathcal{B}$ if and only if $R(N)$ is $R(M)$-CS-Rickart in $\mathcal{A}$. 
\item Assume that $R$ is exact and $L$ is fully faithful. Let $M$ and $N$ be objects of $\mathcal{A}$. 
Then $N$ is dual $M$-CS-Rickart in $\mathcal{A}$ if and only if $L(N)$ is dual $L(M)$-CS-Rickart in $\mathcal{B}$. 
\end{enumerate}
\end{theo}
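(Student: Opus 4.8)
The plan is to obtain Theorem \ref{t:ff} as a direct specialization of Theorem \ref{t:t4}, the only new ingredient being the translation of the fully faithfulness hypotheses into statements about static and adstatic objects. The key observation is the standard fact from the theory of adjunctions that, for an adjoint pair $(L,R)$, the right adjoint $R$ is fully faithful if and only if the counit $\varepsilon:LR\to 1_{\mathcal{B}}$ is a natural isomorphism, while the left adjoint $L$ is fully faithful if and only if the unit $\eta:1_{\mathcal{A}}\to RL$ is a natural isomorphism.

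For part (1), I would first note that if $R$ is fully faithful, then $\varepsilon_B$ is an isomorphism for every object $B$ of $\mathcal{B}$, so that every object of $\mathcal{B}$ is $R$-static, that is, ${\rm Stat}(R)=\mathcal{B}$. In particular the given objects $M$ and $N$ lie in ${\rm Stat}(R)$, so that the hypotheses of Theorem \ref{t:t4}(1) are met, since $L$ is assumed exact. The forward implication then follows from Theorem \ref{t:t4}(1)(i). For the converse, I would use that a fully faithful functor is in particular faithful, and a faithful functor reflects zero objects; hence the hypothesis of Theorem \ref{t:t4}(1)(ii) holds, giving the reverse implication.

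Part (2) is entirely dual in its logic: if $L$ is fully faithful, then $\eta_A$ is an isomorphism for every object $A$ of $\mathcal{A}$, so that ${\rm Adst}(R)=\mathcal{A}$ and in particular $M,N\in{\rm Adst}(R)$. With $R$ exact, Theorem \ref{t:t4}(2)(i) yields the forward implication, and since $L$ fully faithful is faithful and hence reflects zero objects, Theorem \ref{t:t4}(2)(ii) yields the converse.

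Since the whole argument is a direct application of the already-proved Theorem \ref{t:t4}, there is no genuine obstacle; the only point requiring care is the correct pairing of hypotheses, namely matching ``$R$ fully faithful'' with the static condition on $\mathcal{B}$ in part (1), and ``$L$ fully faithful'' with the adstatic condition on $\mathcal{A}$ in part (2), together with the observation that fully faithful functors, being faithful, automatically supply the reflection of zero objects needed for the converse implications.
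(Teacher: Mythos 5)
Your proposal is correct and follows exactly the paper's own route: the paper's proof likewise observes that $R$ fully faithful makes every object of $\mathcal{B}$ lie in ${\rm Stat}(R)$ (dually, $L$ fully faithful makes ${\rm Adst}(R)=\mathcal{A}$) and then invokes Theorem \ref{t:t4}, with the faithful-implies-reflects-zero-objects point already built into the statement of that theorem. You have merely made explicit the standard counit/unit characterization of fully faithful adjoints that the paper leaves implicit.
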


\begin{proof} (1) Since $R$ is fully faithful, every object of $\mathcal{B}$ is in ${\rm Stat}(R)$. 
Then use Theorem \ref{t:t4}.
\end{proof}

\section{Applications}

We give several applications of our theorems, showing that their hypotheses hold in a large number of relevant 
situations. 

Let $\mathcal{A}$ be an abelian category and let $\mathcal{C}$ be a full subcategory of $\mathcal{A}$. Then
$\mathcal{C}$ is called a \emph{reflective} (\emph{coreflective}) subcategory of $\mathcal{A}$ if the inclusion functor
$i:\mathcal{C}\to \mathcal{A}$ has a left (right) adjoint. In any of the two cases, $i$ is fully faithful. If $\mathcal{C}$ is reflective, then limits (in particular, kernels) in $\mathcal{C}$ are the same as limits in 
$\mathcal{A}$, while colimits (in particular, cokernels) in $\mathcal{C}$ are taken in $\mathcal{A}$ and then reflected 
in $\mathcal{C}$ \cite[Chapter~X, Proposition~1.2]{St}. For coreflective subcategories one has the dual result. If 
$\mathcal{C}$ is a reflective (coreflective) subcategory of $\mathcal{A}$ and the left (right) adjoint of the inclusion functor $i$ preserves kernels (cokernels), then the subcategory $\mathcal{C}$ is called \emph{Giraud} (\emph{co-Giraud}). In this case, the left (right) adjoint 
of $i$ is exact.

\begin{coll} \label{c:cr} Let $\mathcal{A}$ be an abelian category, $\mathcal{C}$ an abelian full subcategory of
$\mathcal{A}$ and $i:\mathcal{C}\to \mathcal{A}$ the inclusion functor.
\begin{enumerate}[(i)]
\item Assume that $\mathcal{C}$ is a Giraud subcategory of $\mathcal{A}$. 
Let $M$ and $N$ be objects of $\mathcal{C}$. Then $N$ is $M$-CS-Rickart in $\mathcal{C}$ 
if and only if $i(N)$ is $i(M)$-CS-Rickart in $\mathcal{A}$.
\item Assume that $\mathcal{C}$ is a co-Giraud subcategory of $\mathcal{A}$. 
Let $M$ and $N$ be objects of $\mathcal{C}$. Then $N$ is dual $M$-CS-Rickart in $\mathcal{C}$ if and only if $i(N)$ is dual $i(M)$-CS-Rickart in $\mathcal{A}$.
\end{enumerate}
\end{coll}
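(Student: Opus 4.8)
The plan is to deduce both statements directly from Theorem~\ref{t:ff}, recognizing the inclusion functor $i$ as one half of the relevant adjoint pair and the exactness hypothesis as supplying the other half. First I would record the adjunction data. Since $\mathcal{C}$ is reflective, the inclusion $i:\mathcal{C}\to\mathcal{A}$ admits a left adjoint $a:\mathcal{A}\to\mathcal{C}$, and $i$ is fully faithful. As a left adjoint, $a$ automatically preserves cokernels; the Giraud hypothesis adds that $a$ preserves kernels, so $a$ is exact. Dually, when $\mathcal{C}$ is coreflective, $i$ admits a right adjoint $c:\mathcal{A}\to\mathcal{C}$ with $i$ again fully faithful; as a right adjoint $c$ preserves kernels, and the co-Giraud hypothesis supplies preservation of cokernels, so $c$ is exact.

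For part~(i), I would apply Theorem~\ref{t:ff}(1) to the adjoint pair $(a,i)$, assigning it the roles $L=a$ (exact) and $R=i$ (fully faithful). Here the source category of the theorem is $\mathcal{A}$ and its target is $\mathcal{C}$, so its objects $M,N$ lie in $\mathcal{C}$, and the stated equivalence reads precisely that $N$ is $M$-CS-Rickart in $\mathcal{C}$ if and only if $i(N)$ is $i(M)$-CS-Rickart in $\mathcal{A}$. For part~(ii), I would instead apply Theorem~\ref{t:ff}(2) to the adjoint pair $(i,c)$, now with $L=i$ (fully faithful) and $R=c$ (exact). In this instantiation the theorem's first category is $\mathcal{C}$ and its second is $\mathcal{A}$, so its objects $M,N$ again live in $\mathcal{C}$, and its conclusion becomes that $N$ is dual $M$-CS-Rickart in $\mathcal{C}$ if and only if $i(N)=L(N)$ is dual $i(M)$-CS-Rickart in $\mathcal{A}$.

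Since essentially all of the genuine content has already been absorbed into Theorem~\ref{t:ff}, I expect no real obstacle here; the proof is a matter of correct instantiation. The only points requiring care are the bookkeeping of which functor plays the exact role and which plays the fully faithful role, and the correct matching of the source and target of the inclusion $i$ to the abelian categories $\mathcal{A}$ and $\mathcal{B}$ appearing in Theorem~\ref{t:ff}. I would also note that the (dual) CS-Rickart property is computed relative to the ambient abelian category, so the standing hypothesis that $\mathcal{C}$ is itself abelian is exactly what makes the phrases ``$M$-CS-Rickart in $\mathcal{C}$'' and ``dual $M$-CS-Rickart in $\mathcal{C}$'' meaningful and what licenses taking $\mathcal{C}$ as one of the two abelian categories in Theorem~\ref{t:ff}.
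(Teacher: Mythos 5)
Your proof is correct and follows exactly the paper's route: the paper likewise observes that $i$ is fully faithful with an exact (co)reflector and then cites Theorem~\ref{t:ff}, part (1) for the Giraud case and part (2) for the co-Giraud case. Your write-up merely makes explicit the instantiation $(L,R)=(a,i)$ resp.\ $(i,c)$ and the matching of categories, which the paper leaves implicit.
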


\begin{proof} (1) Since $i$ is fully faithful and its left  adjoint is exact, one uses Theorem \ref{t:ff}.  
\end{proof}

For Grothendieck categories we have the following corollary.

\begin{coll} \label{c:gp} Let $\mathcal{A}$ be a Grothendieck category with a generator $U$ with $R={\rm
End}_{\mathcal{A}}(U)$. Let $S={\rm Hom}_{\mathcal{A}}(U,-):\mathcal{A}\to {\rm Mod}(R)$ and let
$T:{\rm Mod}(R)\to \mathcal{A}$ be a left adjoint of $S$.
Let $M$ and $N$ be objects of $\mathcal{A}$. Then $N$ is an $M$-CS-Rickart object of $\mathcal{A}$ 
if and only if $S(N)$ is an $S(M)$-CS-Rickart right $R$-module.
\end{coll}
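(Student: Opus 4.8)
The plan is to recognize the statement as a direct application of Theorem~\ref{t:ff}(1) to the adjoint pair $(T,S)$. In the notation of that theorem I would set $L=T\colon {\rm Mod}(R)\to \mathcal{A}$ and $R=S\colon \mathcal{A}\to {\rm Mod}(R)$, so that the two ambient abelian categories are $\mathcal{A}_{\mathrm{thm}}={\rm Mod}(R)$ and $\mathcal{B}_{\mathrm{thm}}=\mathcal{A}$, and the objects $M,N$ of the corollary play exactly the role of the objects of $\mathcal{B}$ in the theorem. Here $(T,S)$ is indeed an adjoint pair with $T$ the left adjoint and $S$ the right adjoint, as required. With this identification, the two implications of Theorem~\ref{t:ff}(1) read precisely: $N$ is $M$-CS-Rickart in $\mathcal{A}$ if and only if $S(N)$ is $S(M)$-CS-Rickart in ${\rm Mod}(R)$, which is the desired conclusion.

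To invoke Theorem~\ref{t:ff}(1) I must verify its two hypotheses for this pair, namely that the left adjoint $L=T$ is exact and that the right adjoint $R=S$ is fully faithful. Both are supplied by the Gabriel--Popescu theorem: for a Grothendieck category $\mathcal{A}$ with generator $U$ and $R=\End_{\mathcal{A}}(U)$, the functor $S=\Hom_{\mathcal{A}}(U,-)$ is fully faithful and admits an exact left adjoint. Since a left adjoint is unique up to natural isomorphism, the functor $T$ fixed in the statement is naturally isomorphic to that exact left adjoint and hence is itself exact. Thus both hypotheses hold, and Theorem~\ref{t:ff}(1) applies verbatim to yield the equivalence.

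I expect the only genuine content to lie in this verification step, and specifically in the fullness of $S$. Faithfulness of $S$ is immediate from $U$ being a generator, and exactness of $T$ is the formal, module-theoretic side of the construction; by contrast, fullness of $S$ for an arbitrary (not necessarily projective) generator is the substantive input, and is exactly where the depth of Gabriel--Popescu enters. Once these facts are recorded, no further categorical work is needed: the proof reduces to the single observation that $S$ is fully faithful with exact left adjoint $T$, so that Theorem~\ref{t:ff}(1) gives the claim.
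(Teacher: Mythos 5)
Your proof is correct and is essentially the paper's own argument: both rest on the Gabriel--Popescu theorem (giving that $S$ is fully faithful with an exact left adjoint, and hence that the given $T$ is exact) feeding into Theorem~\ref{t:ff}(1) applied to the adjunction $(T,S)$ with $\mathcal{B}=\mathcal{A}$ and the other category equal to ${\rm Mod}(R)$. The only cosmetic difference is that the paper routes the application through Corollary~\ref{c:cr}, viewing $\mathcal{A}$ as equivalent to a Giraud subcategory of ${\rm Mod}(R)$, whereas you invoke Theorem~\ref{t:ff}(1) directly --- which is, if anything, slightly more streamlined, since it avoids transporting the CS-Rickart property across that equivalence.
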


\begin{proof} By the Gabriel-Popescu Theorem \cite[Chapter~X, Theorem~4.1]{St}, $S$ is fully faithful and 
has an exact left adjoint $T:{\rm Mod}(R)\to \mathcal{A}$. Moreover, $\A$ is equivalent to a Giraud subcategory 
of ${\rm Mod}(R)$ \cite[Chapter~X, Proposition~1.5]{St}, which is a reflective abelian full subcategory. 
Then use Corollary \ref{c:cr}.
\end{proof}

Following \cite[Section~2.2]{DNR}, let $C$ be a coalgebra over a field $k$, and let ${}^C\mathcal{M}$ be the
(Grothendieck) category of left $C$-comodules. Left $C$-comodules will be identified with rational right
$C^*$-modules, where $C^*={\rm Hom}_{k}(C,k)$. Let $i:{}^C\mathcal{M}\to {\rm Mod}(C^*)$ be the inclusion functor,
and let ${\rm Rat}:{\rm Mod}(C^*)\to {}^C\mathcal{M}$ be the functor which associates to every right $C^*$-module its 
rational $C^*$-submodule. Then $(i,{\rm Rat})$ is an adjoint pair. If $C$ is a right semiperfect coalgebra, then the  functor Rat is exact \cite[Corollary~3.2.12]{DNR}, hence ${}^C\mathcal{M}$ is a co-Giraud subcategory of ${\rm Mod}(C^*)$. Then Corollary \ref{c:cr} yields the following consequence.

\begin{coll} \label{c:com1} Let $C$ be a right semiperfect coalgebra over a field.
Let $M$ and $N$ be left $C$-comodules. Then $N$ is a dual $M$-CS-Rickart left $C$-comodule if and only if $i(N)$ is a dual $i(M)$-CS-Rickart right $C^*$-module. 
\end{coll}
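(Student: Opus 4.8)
The plan is to read the statement as a direct specialization of Corollary \ref{c:cr}(ii), so that the whole argument amounts to checking that the categorical data of the comodule setting fit the template there. Concretely, I would take $\mathcal{A}={\rm Mod}(C^*)$ and $\mathcal{C}={}^C\mathcal{M}$, with $i:{}^C\mathcal{M}\to{\rm Mod}(C^*)$ the inclusion coming from the identification of left $C$-comodules with rational right $C^*$-modules. The first routine points to record are that ${\rm Mod}(C^*)$ is abelian, being a module category, and that ${}^C\mathcal{M}$ is a Grothendieck category, hence an abelian full subcategory of ${\rm Mod}(C^*)$.

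The one substantive step is to verify that ${}^C\mathcal{M}$ is a \emph{co-Giraud} subcategory of ${\rm Mod}(C^*)$. Since $(i,{\rm Rat})$ is an adjoint pair, the inclusion $i$ has the right adjoint ${\rm Rat}$, so ${}^C\mathcal{M}$ is coreflective and ${\rm Rat}$ is automatically left exact, being a right adjoint. To upgrade this to the co-Giraud condition I would invoke the hypothesis that $C$ is right semiperfect: by \cite[Corollary~3.2.12]{DNR} the functor ${\rm Rat}$ is then exact, in particular it preserves cokernels, which is precisely what the definition of a co-Giraud subcategory requires.

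With the co-Giraud property in hand, I would simply apply Corollary \ref{c:cr}(ii) to the objects $M,N\in{}^C\mathcal{M}$, obtaining that $N$ is dual $M$-CS-Rickart in ${}^C\mathcal{M}$ if and only if $i(N)$ is dual $i(M)$-CS-Rickart in ${\rm Mod}(C^*)$, which is the assertion. The only place where genuine input is needed is the exactness of ${\rm Rat}$, and this is exactly where the right semiperfectness of $C$ enters; everything else is bookkeeping matching the comodule data to the hypotheses of Corollary \ref{c:cr}. I therefore expect no real obstacle beyond correctly citing the exactness of the rational functor.
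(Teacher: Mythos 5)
Your proposal is correct and follows essentially the same route as the paper: identify ${}^C\mathcal{M}$ with the full subcategory of rational right $C^*$-modules, use the adjoint pair $(i,{\rm Rat})$ together with the exactness of ${\rm Rat}$ for right semiperfect $C$ (via \cite[Corollary~3.2.12]{DNR}) to conclude that ${}^C\mathcal{M}$ is co-Giraud in ${\rm Mod}(C^*)$, and then apply Corollary \ref{c:cr}(ii). The paper presents exactly this argument in the paragraph preceding the corollary, so there is nothing to add.
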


Recall that an \emph{adjoint triple} of functors is a triple $(L,F,R)$ of covariant functors $F:\mathcal{A}\to 
\mathcal{B}$ and $L,R:\mathcal{B}\to \mathcal{A}$ such that $(L,F)$ and $(F,R)$ are adjoint pairs of functors. Then $F$ 
is an exact functor. It is known that $L$ is fully faithful if and only if so is $R$ \cite[Lemma~1.3]{DT}.
Now Theorem \ref{t:ff} yields the following consequence.  

\begin{coll} \label{c:tripleff} Let $(L,F,R)$ be an adjoint triple of covariant functors $F:\mathcal{A}\to \mathcal{B}$
and $L,R:\mathcal{B}\to \mathcal{A}$ between abelian categories. Let $M$ and $N$ be objects of $\mathcal{B}$, 
and assume that $L$ (or $R$) is fully faithful. Then: 
\begin{enumerate}
\item $N$ is $M$-CS-Rickart in $\mathcal{B}$ if and only if $R(N)$ is $R(M)$-CS-Rickart in $\mathcal{A}$.
\item $N$ is dual $M$-CS-Rickart in $\mathcal{B}$ if and only if $L(N)$ is dual $L(M)$-CS-Rickart in $\mathcal{A}$.
\end{enumerate}
\end{coll}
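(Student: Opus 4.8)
The plan is to derive both equivalences directly from Theorem \ref{t:ff} by singling out the two adjoint pairs $(L,F)$ and $(F,R)$ contained in the triple. First I would assemble the two structural facts already recorded in the text: since $(L,F,R)$ is an adjoint triple, $F$ is simultaneously a left and a right adjoint and hence exact; and by \cite[Lemma~1.3]{DT} the functor $L$ is fully faithful precisely when $R$ is, so the hypothesis that $L$ (or $R$) is fully faithful in fact supplies full faithfulness of both. With these in hand, each part is a single application of Theorem \ref{t:ff} after matching up the right functors.

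For part (1) I would apply Theorem \ref{t:ff}(1) to the adjoint pair $(F,R)$, in which $F:\mathcal{A}\to\mathcal{B}$ is the left adjoint and $R:\mathcal{B}\to\mathcal{A}$ is the right adjoint. Theorem \ref{t:ff}(1) requires the left adjoint to be exact and the right adjoint to be fully faithful, which are exactly the facts assembled above applied to $F$ and $R$. Since the objects $M$ and $N$ lie in $\mathcal{B}$, which is the source of the right adjoint $R$, the theorem yields immediately that $N$ is $M$-CS-Rickart in $\mathcal{B}$ if and only if $R(N)$ is $R(M)$-CS-Rickart in $\mathcal{A}$, which is the desired statement.

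For part (2) I would instead use the other adjoint pair $(L,F)$, now with $L:\mathcal{B}\to\mathcal{A}$ as the left adjoint and $F:\mathcal{A}\to\mathcal{B}$ as the right adjoint, and invoke Theorem \ref{t:ff}(2). That theorem demands that the right adjoint be exact and the left adjoint be fully faithful, i.e.\ for the pair $(L,F)$ that $F$ is exact and $L$ is fully faithful, again guaranteed by the preliminary observations. Here the source of the left adjoint $L$ is $\mathcal{B}$, so the objects $M,N$ required by Theorem \ref{t:ff}(2) are precisely our objects of $\mathcal{B}$, and the conclusion reads: $N$ is dual $M$-CS-Rickart in $\mathcal{B}$ if and only if $L(N)$ is dual $L(M)$-CS-Rickart in $\mathcal{A}$, which is part (2).

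The only delicate point -- and I expect it to be a matter of bookkeeping rather than of genuine difficulty -- is keeping track of which functor plays the left-adjoint role and which of $\mathcal{A}$, $\mathcal{B}$ is the source category in each invocation, since the triple fixes $F$, $L$, $R$ in a given layout while Theorem \ref{t:ff} is phrased for a generic adjoint pair written $(L,R)$ with left adjoint $\mathcal{A}\to\mathcal{B}$. In part (2) the two ambient categories are interchanged relative to the literal statement of Theorem \ref{t:ff}(2), and one must verify that the objects indeed sit in the source of the relevant left adjoint. Once these identifications are fixed correctly, no further computation is needed.
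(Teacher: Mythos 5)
Your proposal is correct and follows exactly the paper's intended route: the paper derives this corollary directly from Theorem~\ref{t:ff}, using that $F$ is exact (being both a left and a right adjoint) and that $L$ is fully faithful if and only if $R$ is (by \cite[Lemma~1.3]{DT}), then applying Theorem~\ref{t:ff}(1) to the pair $(F,R)$ and Theorem~\ref{t:ff}(2) to the pair $(L,F)$, with the same bookkeeping of which category hosts $M$ and $N$ that you carry out.
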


Now let $A, C$ be rings and let $_C B_A$ be a bimodule. Let $R=\begin{pmatrix} A & 0 \\ B & C \end{pmatrix}$
be the formal triangular matrix ring constructed from $A, B, C$. 
Following \cite[Chapter~4, Section A]{Goodearl}, for a right $A$-module $M$, a right $C$-module $N$ and a right
$A$-module homomorphism $f: N\otimes_C B\to M$, one may endow the abelian group $P=M\oplus N$ 
with a right $R$-module structure as follows. If we write each pair $(m, n)\in P$ in the form 
$\left (\begin{smallmatrix} & 0 \\ m & \\  & n \\ \end{smallmatrix} \right)$, scalar multiplication is defined by
$$\left (\begin{smallmatrix} & 0 \\ m & \\  & n \\ \end{smallmatrix} \right)
\begin{pmatrix} a & 0 \\ b & c \end{pmatrix}=
\left (\begin{smallmatrix}  & 0 \\ ma+f(n\otimes b) & \\  & nc \\ \end{smallmatrix} \right)$$
and we write $P=\left (\begin{smallmatrix} & 0 \\ M & \\  & N \\ \end{smallmatrix} \right)$. 
Moreover, every right $R$-module has this form. 

Following \cite[Chapter 4, Section A, Exercises~19 and 20]{Goodearl}, consider the covariant functors:
\begin{align*} & J_{23}: {\rm Mod}(C) \to {\rm Mod}(R),\quad J_{23}(N)=
 \left (\begin{smallmatrix}  & 0 \\ N\otimes_C B & \\  & N \\ \end{smallmatrix} \right), \\
& J_3: {\rm Mod}(C) \to {\rm Mod}(R),\quad J_3(N)=
J_{23}(N)/\left (\begin{smallmatrix}  & 0 \\ N\otimes_C B & \\  & 0 \\ \end{smallmatrix} \right), \\
& P_3: {\rm Mod}(R)\to {\rm Mod}(C), \quad P_3(M)=
M\left (\begin{smallmatrix} 0 & 0 \\ 0 & C \\ \end{smallmatrix} \right).
\end{align*} 
Then $(J_{23}, P_3, J_3)$ is an adjoint triple \cite[Chapter 4, Section A, Exercise 22]{Goodearl}.

\begin{coll} Consider the ring $R=\begin{pmatrix} A & 0 \\ B & C \end{pmatrix}$, 
where $A,C$ are rings and $_C B_A$ is a bimodule. Let $M$ and $N$ be right $C$-modules. Then: 
\begin{enumerate}
\item $N$ is an $M$-CS-Rickart right $C$-module if and only if $J_3(N)$ is a $J_3(M)$-CS-Rickart right $R$-module.
\item $N$ is a dual $M$-CS-Rickart right $C$-module if and only if $J_{23}(N)$ is a dual $J_{23}(M)$-CS-Rickart right 
$R$-module.
\end{enumerate}
\end{coll}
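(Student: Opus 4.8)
The plan is to recognize the final statement as a direct instance of Corollary \ref{c:tripleff} applied to the adjoint triple $(J_{23}, P_3, J_3)$ recorded just above it. I would set $\mathcal{A}={\rm Mod}(R)$ and $\mathcal{B}={\rm Mod}(C)$, so that $F=P_3\colon {\rm Mod}(R)\to {\rm Mod}(C)$ plays the role of the middle functor, while $L=J_{23}$ and $R=J_3$ are the two outer functors ${\rm Mod}(C)\to {\rm Mod}(R)$. Since $M$ and $N$ are right $C$-modules, they are exactly objects of $\mathcal{B}$, as Corollary \ref{c:tripleff} requires. With this dictionary, item (1) of Corollary \ref{c:tripleff} reads ``$N$ is $M$-CS-Rickart if and only if $J_3(N)$ is $J_3(M)$-CS-Rickart'' and item (2) reads ``$N$ is dual $M$-CS-Rickart if and only if $J_{23}(N)$ is dual $J_{23}(M)$-CS-Rickart'', which are precisely the two assertions to be proved.

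It then remains only to check the hypotheses of Corollary \ref{c:tripleff}. That $(J_{23}, P_3, J_3)$ is an adjoint triple is the cited exercise from \cite{Goodearl}, and the exactness of the middle functor $F=P_3$ is automatic for any adjoint triple, as noted in the paragraph preceding Corollary \ref{c:tripleff}. Thus the single substantive point is that one of the outer functors, say $L=J_{23}$, is fully faithful; by \cite[Lemma~1.3]{DT} this is equivalent to $R=J_3$ being fully faithful, so either one suffices.

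To verify full faithfulness I would compute the unit of the adjunction $J_{23}\dashv P_3$. A left adjoint is fully faithful precisely when this unit $\eta\colon 1_{{\rm Mod}(C)}\to P_3J_{23}$ is a natural isomorphism. From the explicit descriptions one has, for every right $C$-module $N$,
$$P_3(J_{23}(N))=J_{23}(N)\begin{pmatrix} 0 & 0 \\ 0 & C \end{pmatrix}\cong N,$$
since $J_{23}(N)$ has $N$ as its $C$-component, and under this identification $\eta_N$ is the canonical isomorphism. Hence $J_{23}$ is fully faithful. (Alternatively, the same conclusion follows for $J_3$ from the computation $P_3J_3\cong 1_{{\rm Mod}(C)}$, using the counit of $P_3\dashv J_3$.) With all hypotheses in place, Corollary \ref{c:tripleff} yields both equivalences at once.

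I do not anticipate a genuine obstacle: the argument is a verification that the concrete functors of \cite{Goodearl} fit the abstract framework already established. The only point demanding care is the bookkeeping of which of $J_{23}$, $J_3$ plays the role of $L$ versus $R$ in Corollary \ref{c:tripleff}, together with the routine module computation confirming $P_3J_{23}\cong 1$ (equivalently $P_3J_3\cong 1$), which pins down the required full faithfulness.
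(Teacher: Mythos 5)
Your proposal is correct and takes essentially the same route as the paper: both apply Corollary \ref{c:tripleff} to the adjoint triple $(J_{23},P_3,J_3)$ with $\mathcal{B}={\rm Mod}(C)$, deducing full faithfulness of the outer functors from $P_3J_{23}\cong 1_{{\rm Mod}(C)}$ and $P_3J_3\cong 1_{{\rm Mod}(C)}$. The only difference is that you verify the unit isomorphism explicitly where the paper simply cites Goodearl's exercise.
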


\begin{proof} Since $P_3J_{23}\cong 1_{{\rm Mod}(C)}$ and $P_3J_3\cong 1_{{\rm Mod}(C)}$ 
\cite[Chapter 4, Section A, Exercise 22]{Goodearl}, $J_{23}$ and $J_3$ are fully faithful. 
Then use Corollary \ref{c:tripleff} for the adjoint triple of functors $(J_{23}, P_3, J_3)$.
\end{proof}

Following \cite{Nasta-04}, we recall some notation and terminology on graded modules. 
In what follows $G$ will denote a group with identity element $e$, and $R$ will be a $G$-graded ring. 
For a $G$-graded ring $R=\bigoplus_{\sigma\in G}R_{\sigma}$, denote by ${\rm gr}(R)$ the (Grothendieck) category which
has as objects the $G$-graded unital right $R$-modules and as morphisms the morphisms of $G$-graded unital right
$R$-modules. We consider the following functors:
\begin{enumerate}
\item For $\sigma\in G$, the \emph{$\sigma$-suspension functor} $T_{\sigma}:\mathrm{gr}(R)\to \mathrm{gr}(R)$ defined 
by 
$T_{\sigma}(M)=M({\sigma})$. Recall that for a graded right $R$-module $M$ and $\sigma\in G$, 
the \emph{$\sigma$-suspension} $M(\sigma)$ of $M$ 
is the graded right $R$-module which is equal to $M$ as a right $R$-module and the gradation is given by 
$M({\sigma})_{\tau}=M_{\sigma\tau}$ for every $\tau\in G$.  
\item For $\sigma\in G$, the functor $(-)_{\sigma}:\mathrm{gr}(R)\to {\rm Mod}(R_e)$, 
which associates to every graded right $R$-module $M=\bigoplus_{\tau\in G}M_{\tau}$ the right $R_e$-module 
$M_{\sigma}$. 
\item The \emph{induced functor} ${\rm Ind}:{\rm Mod}(R_e)\to {\rm gr}(R)$ defined as follows: 
for a right $R_e$-module $N$, ${\rm Ind}(N)$ is the graded right $R$-module $M=N\otimes_{R_e}R$, 
where the gradation of $M=\bigoplus_{\sigma\in G}M_{\sigma}$ is given by 
$M_{\sigma}=N_{\sigma}\otimes_{R_e}R$ for every $\sigma\in G$. 
\item The \emph{coinduced functor} ${\rm Coind}:{\rm Mod}(R_e)\to {\rm gr}(R)$ defined as follows:
for a right $R_e$-module $N$, ${\rm Coind}(N)$ is the graded right $R$-module 
$M^*=\bigoplus_{\sigma\in G}M'_{\sigma}$, where 
\[M'_{\sigma}=\{f\in {\rm Hom}_{R_e}(R,N)\mid f(R_{\sigma'})=0 \textrm{ for every } \sigma'\neq \sigma^{-1}\}.\]
\end{enumerate}

\begin{coll} \label{c1:triplegr} Let $R$ be a $G$-graded ring, and let $M$ and $N$ be right $R_e$-modules. Then: 
\begin{enumerate} 
\item $N$ is an $M$-CS-Rickart right $R_e$-module if and only if 
${\rm Coind}(N)$ is a ${\rm Coind}(M)$-CS-Rickart graded right $R$-module.
\item $N$ is a dual $M$-CS-Rickart right $R_e$-module if and only if 
${\rm Ind}(N)$ is a dual ${\rm Ind}(M)$-CS-Rickart graded right $R$-module.
\end{enumerate}
\end{coll}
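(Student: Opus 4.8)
The plan is to recognize the three functors as a single adjoint triple and then invoke Corollary \ref{c:tripleff}. First I would record the standard adjunctions from graded module theory (see \cite{Nasta-04}): the $e$-component functor $(-)_e:\mathrm{gr}(R)\to {\rm Mod}(R_e)$ admits ${\rm Ind}$ as a left adjoint and ${\rm Coind}$ as a right adjoint, so that $({\rm Ind},(-)_e,{\rm Coind})$ is an adjoint triple of covariant functors with the $e$-component functor in the middle. In the notation of Corollary \ref{c:tripleff} this means $\mathcal{A}=\mathrm{gr}(R)$, $\mathcal{B}={\rm Mod}(R_e)$, $F=(-)_e:\mathrm{gr}(R)\to {\rm Mod}(R_e)$, $L={\rm Ind}:{\rm Mod}(R_e)\to \mathrm{gr}(R)$ and $R={\rm Coind}:{\rm Mod}(R_e)\to \mathrm{gr}(R)$, and the middle functor $F$ is automatically exact, since it has adjoints on both sides.

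Next I would check the remaining hypothesis of Corollary \ref{c:tripleff}, namely that $L={\rm Ind}$ (equivalently, by \cite[Lemma~1.3]{DT}, that $R={\rm Coind}$) is fully faithful. For this it suffices to observe that taking the $e$-component of an induced or a coinduced module recovers the original $R_e$-module: one has $({\rm Ind}(N))_e=N\otimes_{R_e}R_e\cong N$ and $({\rm Coind}(N))_e\cong {\rm Hom}_{R_e}(R_e,N)\cong N$, naturally in $N$. Since these natural isomorphisms are precisely the unit of the pair $({\rm Ind},(-)_e)$ and the counit of the pair $((-)_e,{\rm Coind})$, their invertibility says exactly that ${\rm Ind}$ (and hence ${\rm Coind}$) is fully faithful.

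With both hypotheses in place, I would apply Corollary \ref{c:tripleff} to the adjoint triple $({\rm Ind},(-)_e,{\rm Coind})$ with $M,N$ objects of $\mathcal{B}={\rm Mod}(R_e)$. Part (1) of that corollary, read with $R={\rm Coind}$, yields statement (1): $N$ is $M$-CS-Rickart in ${\rm Mod}(R_e)$ if and only if ${\rm Coind}(N)$ is ${\rm Coind}(M)$-CS-Rickart in $\mathrm{gr}(R)$. Part (2), read with $L={\rm Ind}$, yields statement (2): $N$ is dual $M$-CS-Rickart in ${\rm Mod}(R_e)$ if and only if ${\rm Ind}(N)$ is dual ${\rm Ind}(M)$-CS-Rickart in $\mathrm{gr}(R)$.

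The only nonroutine point, and hence the main obstacle, is establishing the adjoint-triple structure together with the full faithfulness of ${\rm Ind}$ and ${\rm Coind}$. Both are classical facts about graded modules, and the full faithfulness reduces to the $e$-component computation above; once these are cited or verified, the result follows immediately from the general categorical theorem, with no further work needed on the CS-Rickart definitions themselves.
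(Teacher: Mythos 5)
Your proposal is correct and follows essentially the same route as the paper: the paper also establishes that $({\rm Ind},(-)_e,{\rm Coind})$ is an adjoint triple (citing N\u ast\u asescu--Van Oystaeyen, Theorem~2.5.5, in the slightly more general suspended form), deduces full faithfulness of ${\rm Ind}$ and ${\rm Coind}$ from the natural isomorphisms $(-)_e\circ{\rm Ind}\cong 1_{{\rm Mod}(R_e)}$ and $(-)_e\circ{\rm Coind}\cong 1_{{\rm Mod}(R_e)}$, and then applies Corollary~\ref{c:tripleff}. Your explicit $e$-component computation and the unit/counit criterion just make precise the step the paper compresses into a citation.
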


\begin{proof} By \cite[Theorem~2.5.5]{Nasta-04}, for $\sigma\in G$, 
$(T_{\sigma^{-1}}\circ {\rm Ind},(-)_{\sigma},T_{\sigma^{-1}}\circ {\rm Coind})$ is an adjoint triple, 
and we have $(-)_{\sigma}\circ T_{\sigma^{-1}}\circ {\rm Ind}\cong 1_{{\rm Mod}(R_e)}$ 
and $(-)_{\sigma}\circ T_{\sigma^{-1}}\circ {\rm Coind}\cong 1_{{\rm Mod}(R_e)}$, that is,  
the functors $T_{\sigma^{-1}}\circ {\rm Ind}$ and $T_{\sigma^{-1}}\circ {\rm Coind}$ are fully faithful.
In particular, $({\rm Ind},(-)_{e},{\rm Coind})$ is an adjoint triple,
and ${\rm Ind}$ and ${\rm Coind}$ are fully faithful.
Now use Corollary \ref{c:tripleff}. 
\end{proof}

Now let us recall the concept of recollement of abelian categories, following \cite{Ps}. For an additive functor 
$F:\A\to \B$ between abelian categories, we denote by ${\rm Ker}(F)$ the kernel of $F$, which consists of all objects 
$A$ of $\A$ such that $F(A)=0$. A \emph{recollement} of abelian categories $\A$, $\B$ and $\C$, denoted by 
$(\A,\B,\C)$, 
is a diagram of functors: 
$$\SelectTips{cm}{}
\xymatrix{
 \A \ar[rr]^i && \B \ar[rr]^e \ar@/^1.5pc/[ll]^{p} \ar@/_1.5pc/[ll]_{q} && \C \ar@/^1.5pc/[ll]^{r} \ar@/_1.5pc/[ll]_{l}
}
$$
which satisfy the following conditions: (i) $(l,e,r)$ is an adjoint triple; (ii) $(q,i,p)$ is 
an adjoint triple; (iii) $i$, $l$ and $r$ are fully faithful; (iv) ${\rm Im}(i)={\rm Ker}(e)$. 

Now Corollary \ref{c:tripleff} yields the following consequence.

\begin{coll} \label{c:rec} Let $(\A,\B,\C)$ be a recollement of abelian categories as above. Let $M$ and $N$ be objects of $\mathcal{C}$. Then:
\begin{enumerate}[(1)]
\item $N$ is $M$-CS-Rickart in $\mathcal{C}$ if and only if $r(N)$ is $r(M)$-CS-Rickart in $\mathcal{B}$.
\item $N$ is dual $M$-CS-Rickart in $\mathcal{C}$ if and only if $l(N)$ is dual $l(M)$-CS-Rickart in $\mathcal{B}$. 
\end{enumerate}
\end{coll}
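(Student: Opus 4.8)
The plan is to recognize that Corollary~\ref{c:rec} is essentially an immediate application of Corollary~\ref{c:tripleff} to the two adjoint triples packaged inside the recollement $(\A,\B,\C)$. The key observation is that a recollement gives us exactly the data that Corollary~\ref{c:tripleff} requires: by condition~(i) we have an adjoint triple $(l,e,r)$ of covariant functors $e:\B\to \C$ and $l,r:\C\to \B$, and by condition~(iii) the functors $l$ and $r$ are fully faithful. This is precisely the hypothesis of Corollary~\ref{c:tripleff}, with the roles played by $(L,F,R)=(l,e,r)$, the ambient category $\B$ in place of $\mathcal{A}$, and $\C$ in place of $\mathcal{B}$.

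First I would note that $e:\B\to \C$ is exact, since it sits in the middle of an adjoint triple (as recalled just before Corollary~\ref{c:tripleff}, the middle functor of an adjoint triple is automatically exact). Next, since $l$ is fully faithful (equivalently $r$ is, by \cite[Lemma~1.3]{DT}, though here both are assumed fully faithful directly), the hypotheses of Corollary~\ref{c:tripleff} are met for objects $M$ and $N$ of $\C$. Applying part~(1) of that corollary to the triple $(l,e,r)$ yields that $N$ is $M$-CS-Rickart in $\C$ if and only if $r(N)$ is $r(M)$-CS-Rickart in $\B$, which is exactly statement~(1). Applying part~(2) of that corollary yields that $N$ is dual $M$-CS-Rickart in $\C$ if and only if $l(N)$ is dual $l(M)$-CS-Rickart in $\B$, which is statement~(2).

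There is really no main obstacle here beyond correctly matching notation: one must be careful that in Corollary~\ref{c:tripleff} the ``outer'' categories are the source of the fully faithful adjoints, so the objects $M,N$ live in $\mathcal{B}$ of that corollary, which corresponds to $\C$ in the recollement, and the transferred objects $r(N),r(M)$ and $l(N),l(M)$ live in $\mathcal{A}$ of that corollary, which corresponds to the middle category $\B$. The second adjoint triple $(q,i,p)$ from condition~(ii) and the remaining recollement axioms play no role in this particular statement; they are not needed because all the transfer happens between $\C$ and $\B$ via the first triple $(l,e,r)$. Thus the entire proof reduces to the single sentence that $e$ is exact, $l$ (or $r$) is fully faithful, and Corollary~\ref{c:tripleff} applies to the adjoint triple $(l,e,r)$.
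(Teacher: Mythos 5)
Your proposal is correct and is exactly the paper's own argument: the paper proves Corollary~\ref{c:rec} simply by applying Corollary~\ref{c:tripleff} to the adjoint triple $(l,e,r)$ of the recollement, using that $l$ and $r$ are fully faithful by axiom~(iii). Your careful matching of categories (the recollement's $\C$ playing the role of $\mathcal{B}$ in Corollary~\ref{c:tripleff}, and the middle category $\B$ playing the role of $\mathcal{A}$) and your observation that the triple $(q,i,p)$ is irrelevant here are both accurate.
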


We recall a specific example of recollement of abelian categories from \cite{Ps}, where further relevant examples are 
presented. Let $R$ be a unitary ring and let $e=e^2\in R$ be an idempotent. Then the following diagram is a recollement 
of abelian categories \cite[Example~2.7]{Ps}:
$$\SelectTips{cm}{}
\xymatrix{
 {\rm Mod}(R/ReR) \ar[rr]^i && {\rm Mod}(R) \ar[rr]^{e(-)} \ar@/^1.5pc/[ll]^{{\rm Hom}_R(R/ReR,-)} 
\ar@/_1.5pc/[ll]_{(R/ReR)\otimes_R-} && {\rm Mod}(eRe) \ar@/^1.5pc/[ll]^{{\rm Hom}_{eRe}(eR,-)} 
\ar@/_1.5pc/[ll]_{Re\otimes_{eRe}-}
}
$$
where the module categories are categories of left modules,  $i$ is the inclusion functor and $e(-)$ is the functor given by left multiplication by $e$. Now we have the following consequence of Corollary \ref{c:rec}.

\begin{coll} \label{c:rec2} Let $R$ be a unitary ring and let $e=e^2\in R$ be an idempotent. Let $M$ and $N$ be objects of ${\rm Mod}(eRe)$. Then: 
\begin{enumerate}[(1)]
\item $N$ is $M$-CS-Rickart in ${\rm Mod}(eRe)$ if and only if ${\rm Hom}_{eRe}(eR,N)$ is ${\rm 
Hom}_{eRe}(eR,M)$-CS-Rickart in ${\rm Mod}(R)$.
\item $N$ is dual $M$-CS-Rickart in ${\rm Mod}(eRe)$ if and only if $Re\otimes_{eRe}N$ is dual 
$Re\otimes_{eRe}M$-CS-Rickart in ${\rm Mod}(R)$.
\end{enumerate}
\end{coll}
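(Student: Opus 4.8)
The plan is to derive the statement as a direct instance of Corollary \ref{c:rec}, applied to the recollement recalled immediately above. First I would match the abstract recollement data $(\A,\B,\C)$ with the concrete categories here, namely $\A={\rm Mod}(R/ReR)$, $\B={\rm Mod}(R)$ and $\C={\rm Mod}(eRe)$, the functor $e$ in the abstract diagram being $e(-):{\rm Mod}(R)\to{\rm Mod}(eRe)$, given by left multiplication by the idempotent $e$. Since $M$ and $N$ are objects of ${\rm Mod}(eRe)=\C$, Corollary \ref{c:rec} applies to them.

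The key step is then to read off the two functors $l,r:\C\to\B$ from the adjoint triple $(l,e,r)$. Here $Re\otimes_{eRe}-$ is left adjoint to $e(-)$ and ${\rm Hom}_{eRe}(eR,-)$ is right adjoint to $e(-)$, so that $l=Re\otimes_{eRe}-$ and $r={\rm Hom}_{eRe}(eR,-)$, exactly as displayed in the lower and upper curved arrows from ${\rm Mod}(eRe)$. Substituting into Corollary \ref{c:rec}(1) yields that $N$ is $M$-CS-Rickart in ${\rm Mod}(eRe)$ if and only if $r(N)={\rm Hom}_{eRe}(eR,N)$ is $r(M)$-CS-Rickart in ${\rm Mod}(R)$, giving assertion (1); substituting into Corollary \ref{c:rec}(2) yields the dual statement with $l(N)=Re\otimes_{eRe}N$, giving assertion (2).

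The main point to verify, rather than a genuine obstacle, is that the displayed diagram is indeed a recollement so that Corollary \ref{c:rec} is applicable; this is guaranteed by \cite[Example~2.7]{Ps}, which supplies the adjoint-triple structure and the fact that $i$, $l$ and $r$ are fully faithful. With these hypotheses in hand, no further computation is needed, since all the categorical content has already been carried out in Corollary \ref{c:rec} and, upstream, in Corollary \ref{c:tripleff} and Theorem \ref{t:ff}. The only care required is bookkeeping the orientation of the two curved arrows, that is, keeping $r={\rm Hom}_{eRe}(eR,-)$ paired with the CS-Rickart statement and $l=Re\otimes_{eRe}-$ with the dual CS-Rickart statement.
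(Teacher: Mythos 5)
Your proposal is correct and is exactly the paper's own argument: the paper states Corollary \ref{c:rec2} as an immediate consequence of Corollary \ref{c:rec} applied to the recollement of \cite[Example~2.7]{Ps}, with the same identification $l=Re\otimes_{eRe}-$ and $r={\rm Hom}_{eRe}(eR,-)$ that you make. Your care in matching the adjoint triple $(l,e(-),r)$ and pairing $r$ with the CS-Rickart statement and $l$ with the dual one is precisely the only content of the deduction.
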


\section{Endomorphism rings}

In this section we give some applications to endomorphism rings of (graded) modules and comodules. 
We start with a specific result for modules, for which we need the following concepts.

\begin{defn} \rm A right $R$-module $M$ is called: 
\begin{enumerate}[(i)]
\item \emph{im-local-retractable} if for every monomorphism $k:K\to M$ and for every $x\in K$, 
there exists a homomorphism $h:M\to K$ such that $x\in {\rm Im}(hk)$. 
\item \emph{im-local-coretractable} if for every epimorphism $c:M\to C$ and for every $z\in C$, 
there exists a homomorphism $h:C\to M$ such that $z\in {\rm Im}(ch)$. 
\end{enumerate}
\end{defn}

\begin{rem} \label{r:kc} \rm (i) Clearly, every im-local-retractable right $R$-module $M$ is $k$-local-retractable,
in the sense that for every endomorphism $f:M\to M$ with $k={\rm ker}(f):K\to M$ and for every $x\in K$, 
there exists a homomorphism $h:M\to K$ such that $x\in {\rm Im}(hk)$ \cite[Definition~3.6]{LRR10}. 
Dually, every im-local-coretractable right $R$-module $M$ is $c$-local-coretractable, 
in the sense that for every endomorphism $f:M\to M$ with $c={\rm coker}(f):M\to C$ and for every $z\in C$, 
there exists a homomorphism $h:C\to M$ such that $z\in {\rm Im}(ch)$ \cite[Definition~4.5]{CKT20-2}.

(ii) The class of im-local-retractable right $R$-modules and the class of im-local-coretractable right $R$-modules
are closed under direct summands. Indeed, for the former, let $M$ be a im-local-retractable right $R$-module
and let $N$ be a direct summand of $M$. Let $k:K\to N$ be a monomorphism, $i:N\to M$ the canonical inclusion and $x\in 
K$. 
Then there is a homomorphism $h:M\to K$, and so there is the homomorphism $hi:N\to K$ 
such that $x\in {\rm Im}(hik)$. Hence $N$ is im-local-retractable.
\end{rem}

\begin{coll} \label{c:end0} Let $M$ be a right $R$-module, and let $S={\rm End}_R(M)$.
\begin{enumerate}
\item If $M$ is im-local-retractable and $S$ is a self-CS-Rickart right $S$-module, 
then $M$ is a self-CS-Rickart right $R$-module.
\item If $M$ is im-local-coretractable and $S$ is a dual self-CS-Rickart left $S$-module,
then $M$ is a dual self-CS-Rickart right $R$-module.
\end{enumerate}
\end{coll}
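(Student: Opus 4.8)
The plan is to prove (1) directly and to obtain (2) by the dual argument, so I concentrate on (1). I would transport the problem to $S=\End_R(M)$ through the left exact functor $H=\Hom_R(M,-)\colon {\rm Mod}(R)\to {\rm Mod}(S)$, which sends $M$ to the right $S$-module $S_S$ and sends an endomorphism $f\colon M\to M$ to left multiplication $\lambda_f\colon S_S\to S_S$, $\lambda_f(g)=fg$. Left exactness gives ${\rm Ker}(\lambda_f)=\Hom_R(M,{\rm Ker}(f))=\{g\in S\mid {\rm Im}(g)\subseteq {\rm Ker}(f)\}$, while for an idempotent $e\in S$ one has $eS=\{g\in S\mid {\rm Im}(g)\subseteq {\rm Im}(e)\}$, with ${\rm Im}(e)=e(M)$ a direct summand of $M$. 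These two identifications are the dictionary between the submodule structure of $M$ and the right ideal structure of $S$.

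Now I would fix $f\colon M\to M$ and set $K={\rm Ker}(f)$. Since $S_S$ is self-CS-Rickart, applying this to the endomorphism $\lambda_f$ produces an idempotent $e\in S$ such that ${\rm Ker}(\lambda_f)$ is essential in the direct summand $eS$ of $S_S$. Put $N={\rm Im}(e)$, a direct summand of $M$. By the characterisation of self-CS-Rickart objects in Definition~2.4, it then suffices to show that $K$ is essential in $N$, and this is exactly where im-local-retractability enters, in two steps.

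First I would show $K\subseteq N$. Given $x\in K$, im-local-retractability applied to the inclusion $k\colon K\to M$ yields $h\colon M\to K$ with $x\in{\rm Im}(hk)$; then $g=kh\in S$ satisfies ${\rm Im}(g)\subseteq K$ and $x\in{\rm Im}(g)$. Hence $g\in{\rm Ker}(\lambda_f)\subseteq eS$, so ${\rm Im}(g)\subseteq N$ and therefore $x\in N$. Second, I would show $K$ is essential in $N$: for a nonzero submodule $L\subseteq N$, pick $0\neq x\in L$ and use retractability for the inclusion $L\to M$ to manufacture, as above, a nonzero $g\in\Hom_R(M,L)=\{g\mid{\rm Im}(g)\subseteq L\}$, which is a nonzero right $S$-submodule of $eS$. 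Since ${\rm Ker}(\lambda_f)$ is essential in $eS$, the intersection ${\rm Ker}(\lambda_f)\cap\Hom_R(M,L)$ contains some $g'\neq 0$; then $0\neq{\rm Im}(g')\subseteq K\cap L$, so $K\cap L\neq 0$. Thus $K$ is essential in the direct summand $N$ of $M$, and $M$ is self-CS-Rickart.

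Part (2) I would handle by the dual argument: replace $H$ by its cokernel-side counterpart, work with the one-sided module $S$ on the opposite side, use the dual self-CS-Rickartness of $S$ to extract an idempotent $e$ with ${\rm Im}(e)\subseteq{\rm Im}(f)$ a direct summand of $M$, and transfer superfluousness in place of essentiality, now invoking im-local-coretractability. I expect the delicate point, in both parts, to be precisely this transfer between the submodule lattice of $M$ and the one-sided ideal lattice of $S$; the retractability hypotheses are tailored to be the exact pointwise conditions that make it go through. For (1) the essentiality transfer is comparatively painless, since it only requires producing a single nonzero common image and never a global splitting; for (2) the dual superfluousness statement is the harder half, and im-local-coretractability is what supplies the required maps on the cokernel side.
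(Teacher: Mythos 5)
Your argument for part (1) is correct, but it is executed quite differently from the paper's. Both proofs make the same opening move: transport $f$ along $H=\Hom_R(M,-)$ to the left multiplication $\lambda_f=H(f)$ on $S_S$ and apply the self-CS-Rickart hypothesis there. They differ in how they return to ${\rm Mod}(R)$. The paper goes back functorially: it takes the decomposition ${\rm ker}(\lambda_f)=sm$ in ${\rm Mod}(S)$, applies the left adjoint $T=-\otimes_SM$ and the counit $\varepsilon$ of the adjunction $(T,H)$, sets $t=\varepsilon_MT(s)$ and $e=T(r)\varepsilon_M^{-1}k$, and verifies $k=te$ and the essentiality of $e$ by elementwise computations with naturality squares; because it applies im-local-retractability inside the direct summand $T(Q)$ of $M$, it also needs the fact that im-local-retractability passes to direct summands (Remark \ref{r:kc}). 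You never invoke $T$ or the adjunction at all: you use the idempotent dictionary $eS=\{g\in S\mid {\rm Im}(g)\subseteq e(M)\}$ together with ${\rm Ker}(\lambda_f)=\{g\in S\mid {\rm Im}(g)\subseteq {\rm Ker}(f)\}$, take the direct summand of $M$ to be $N=e(M)$ (isomorphic to the paper's $T(Q)$, since $T(eS)\cong eM$), and prove $K\subseteq N$ and $K$ essential in $N$ by direct submodule-lattice arguments, applying im-local-retractability only to monomorphisms into $M$ itself, so the summand-closure remark is never needed. The paper's method buys uniformity with its categorical transfer machinery (Theorem \ref{t:t4}) and would survive outside module categories; yours buys brevity and elementarity, which is all this module-theoretic corollary requires. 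Two caveats. First, your essentiality steps silently use the standard fact that categorical essential monomorphisms in module categories coincide with essential submodules; this is harmless but worth saying. Second, for part (2) — which the paper itself does not prove either, so your level of detail matches its — be aware that the dualization is not purely formal: for the left module ${}_SS$ the dictionary changes, and $S\epsilon\subseteq Sf$ only yields $\epsilon=sf$, whose image need not lie in ${\rm Im}(f)$; one must first replace $\epsilon$ by the idempotent $(fs)^2$, whose image does lie inside ${\rm Im}(f)$, before attempting to transfer superfluousness via im-local-coretractability. Your sketch correctly identifies this transfer as the delicate point, but as written it glosses over that adjustment.
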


\begin{proof} (1) Consider the adjoint pair of functors $(T,H)$, 
where $$T=-\otimes_SM:{\rm Mod}(S)\to {\rm Mod}(R),$$ $$H={\rm Hom}_R(M,-):{\rm Mod}(R)\to {\rm Mod}(S).$$
Assume that $M$ is im-local-retractable and $S$ is a self-CS-Rickart right $S$-module.
Let $f:M\to M$ be a homomorphism with kernel $k={\rm ker}(f):K\to M$. 
Since $H(M)=S$ is a self-CS-Rickart right $S$-module, $H(k)={\rm ker}(H(f))=sm$ for some essential monomorphism 
$m:H(K)\to Q$ and section $s:Q\to H(M)$. Let $r:H(M)\to Q$ be a homomorphism such that $rs=1_{Q}$.
Denote $t=\varepsilon_M T(s):T(Q)\to M$ and $e=T(r)\varepsilon_M^{-1}k:K\to T(Q)$. 
Then $t$ is a section.

We prove that $k=te$. To this end, let $x\in K$. 
Since $M$ is im-local-retractable, there exists a homomorphism $h:M\to K$ such that $x=hk(y)$ for some $y\in K$. 
By naturality we have the following commutative diagram:
$$\SelectTips{cm}{}
\xymatrix{
 TH(M) \ar[d]_{\varepsilon_M} \ar[r]^-{TH(kh)} & TH(M) \ar[d]^{\varepsilon_M} \\
 M \ar[r]_{kh} & M
}
$$  
It follows that:
\begin{align*}
\begin{split}
te(x)&=\varepsilon_M T(s)T(r)\varepsilon_M^{-1}k(x) =\varepsilon_M T(s)T(r)\varepsilon_M^{-1}khk(y) \\
&=\varepsilon_M T(s)T(r)\varepsilon_M^{-1}kh\varepsilon_M \varepsilon_M^{-1}k(y) 
=\varepsilon_M T(s)T(r)\varepsilon_M^{-1} \varepsilon_M TH(kh) \varepsilon_M^{-1}k(y) \\
&=\varepsilon_M T(s)T(r) TH(kh) \varepsilon_M^{-1}k(y) 
=\varepsilon_M T(s)T(r) TH(k)TH(h) \varepsilon_M^{-1}k(y) \\
&=\varepsilon_M T(s)T(r) T(s)T(m)TH(h) \varepsilon_M^{-1}k(y) 
=\varepsilon_M T(s)T(m)TH(h) \varepsilon_M^{-1}k(y) \\
&=\varepsilon_M TH(k)TH(h) \varepsilon_M^{-1}k(y) 
=\varepsilon_M TH(kh) \varepsilon_M^{-1}k(y) \\
&=kh \varepsilon_M \varepsilon_M^{-1}k(y)=khk(y)=k(x).
\end{split}
\end{align*}
Hence $k=te$, which also shows that $e$ is a monomorphism. 

Finally, let us prove that $e$ is an essential monomorphism. To this end, let $v:T(Q)\to V$ be a homomorphism
such that $ve:K\to V$ is a monomorphism. 
As in the proof of Theorem \ref{t:t4} (ii), $\eta_{Q}:Q\to HT(Q)$ is an isomorphism. Then we have:
\begin{align*} H(ve)& =H(v)HT(r)H(\varepsilon_M^{-1})H(k)=H(v)HT(r)\eta_{H(M)}H(k) \\
&=H(v)\eta_{Q}rH(k)=H(v)\eta_{Q}rsm=H(v)\eta_{Q}m.
\end{align*}
and this is a monomorphism. Since $\eta_{Q}m$ is an essential monomorphism, $H(v)$ must be a monomorphism. 
Let $a={\rm ker}(v):A\to T(Q)$ and $z\in A$. Since $M$ is im-local-retractable, so is its direct summand $T(Q)$ 
by Remark \ref{r:kc}. Hence there exists a homomorphism $g:T(Q)\to A$ such that $z=ga(u)$ for some $u\in A$.
By naturality we have the following commutative diagram:
$$\SelectTips{cm}{}
\xymatrix{
 THT(Q) \ar[d]_{\varepsilon_{T(Q)}} \ar[r]^-{TH(ag)} & THT(Q) \ar[d]^{\varepsilon_{T(Q)}} \\
 T(Q) \ar[r]_{ag} & T(Q)
}
$$ 
Since $H(a)={\rm ker}(H(v))=0$, it follows that:
$$a(z)=aga(u)=ag\varepsilon_{T(Q)}\varepsilon_{T(Q)}^{-1}a(u)=\varepsilon_{T(Q)}TH(ag)\varepsilon_{T(Q)}^{-1}a(u)=0.$$ 
Hence ${\rm ker}(v)=0$, and so $v$ is a monomorphism. This shows that $e$ is an essential monomorphism,
and thus $M$ is a self-CS-Rickart right $R$-module.
\end{proof}

Following \cite{Ishi}, a covariant functor $F:\A\to \B$ between abelian categories 
is called \emph{faithfully exact} provided the sequence 
$F(A)\stackrel{F(f)}\longrightarrow F(B)\stackrel{F(g)}\longrightarrow F(C)$ is exact if and only if 
the sequence $A\stackrel{f}\longrightarrow B\stackrel{g}\longrightarrow C$ is exact.
Let ${}_SM_R$ be a bimodule. The right $R$-module $M$ is called \emph{faithfully projective} if the functor 
${\rm Hom}_R(M,-):{\rm Mod}(R)\to {\rm Mod}(S)$ is faithfully exact. 
The right $R$-module $M$ is called \emph{faithfully injective} if the functor
${\rm Hom}_R(-,M):{\rm Mod}(R)\to {\rm Mod}(S^{\rm op})$ is faithfully exact.
The left $S$-module $M$ is called \emph{faithfully flat} if the functor 
$-\otimes_SM:{\rm Mod}(S)\to {\rm Mod}(R)$ is faithfully exact. 

\begin{coll} \label{c:end} Let $M$ be a right $R$-module and $S={\rm End}_R(M)$. 
\begin{enumerate}
\item Assume that $M$ is a flat left $S$-module. 
\begin{enumerate}[(i)]
\item If $M$ is a self-$CS$-Rickart right $R$-module, then $S$ is a self-$CS$-Rickart right $S$-module.
\item If $M$ is a faithfully projective right $R$-module and $S$ is a self-$CS$-Rickart right $S$-module, 
then $M$ is a self-$CS$-Rickart right $R$-module.
\end{enumerate}
\item Assume that $M$ is a projective right $R$-module. 
\begin{enumerate}[(i)]
\item If $S$ is a dual self-$CS$-Rickart right $S$-module, then $M$ is a dual self-$CS$-Rickart right $R$-module.
\item If $M$ is a faithfully flat left $S$-module and $M$ is a dual self-$CS$-Rickart right $R$-module, 
then $S$ is a dual self-$CS$-Rickart right $S$-module.
\end{enumerate}
\item Assume that $M$ is an injective left $S$-module. 
\begin{enumerate}[(i)]
\item If $M$ is a dual self-CS-Rickart right $R$-module, then $S$ is a self-CS-Rickart left $S$-module. 
\item If $M$ is a faithfully injective right $R$-module and $S$ is a self-CS-Rickart left $S$-module, 
then $M$ is a dual self-CS-Rickart right $R$-module.
\end{enumerate}
\end{enumerate}
\end{coll}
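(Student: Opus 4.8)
The plan is to obtain all three parts as direct applications of Theorems~\ref{t:t4} and~\ref{t:t4contrav}, using the (co)homomorphism functors attached to the bimodule ${}_SM_R$. The recurring observation is that the static, adstatic, and reflexive conditions required by those theorems hold automatically for the objects in play, thanks to the canonical identifications ${\rm Hom}_R(M,M)=S$ and $S\otimes_SM\cong M$. Consequently the only genuine hypotheses to verify in each case are exactness (supplied by the flat/projective/injective assumptions) and reflection of zero objects (supplied by the ``faithfully'' assumptions, since a faithfully exact functor reflects zero objects).

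For parts (1) and (2) I would use the covariant adjoint pair $(T,H)$ with $T=-\otimes_SM:{\rm Mod}(S)\to{\rm Mod}(R)$ and $H={\rm Hom}_R(M,-):{\rm Mod}(R)\to{\rm Mod}(S)$, exactly as in the proof of Corollary~\ref{c:end0}. Here $H(M)=S$ as a right $S$-module, $T(S)\cong M$, and under these identifications the counit $\varepsilon_M$ and the unit $\eta_S$ are identities; thus $M\in{\rm Stat}(H)$ and $S\in{\rm Adst}(H)$. For (1), flatness of ${}_SM$ makes $T$ exact, so Theorem~\ref{t:t4}(1)(i) with $N=M$ gives (i) because $H(M)=S$, while faithful projectivity of $M_R$ makes $H$ reflect zero objects, so Theorem~\ref{t:t4}(1)(ii) gives (ii). For (2), projectivity of $M_R$ makes $H$ exact, so Theorem~\ref{t:t4}(2)(i) with $N=S$ gives (i) because $T(S)\cong M$, while faithful flatness of ${}_SM$ makes $T$ reflect zero objects, so Theorem~\ref{t:t4}(2)(ii) gives (ii).

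For part (3) I would instead use the contravariant right adjoint pair $(L,R')$ with $L={\rm Hom}_R(-,M):{\rm Mod}(R)\to{\rm Mod}(S^{\rm op})$ and $R'={\rm Hom}_S(-,M):{\rm Mod}(S^{\rm op})\to{\rm Mod}(R)$, the adjunction being the standard duality ${\rm Hom}_S(B,{\rm Hom}_R(A,M))\cong{\rm Hom}_R(A,{\rm Hom}_S(B,M))$. Again $L(M)={\rm Hom}_R(M,M)=S$ and $R'L(M)={\rm Hom}_S(S,M)\cong M$ with $\eta_M$ the identity, so $M\in{\rm Refl}(L)$. Injectivity of ${}_SM$ makes $R'$ exact, so Theorem~\ref{t:t4contrav}(2)(i) with $N=M$ gives (i) because $L(M)=S$, while faithful injectivity of $M_R$ makes $L$ reflect zero objects, so Theorem~\ref{t:t4contrav}(2)(ii) gives (ii).

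The main obstacle is purely bookkeeping: one must keep straight the variance of each functor, the side on which $S={\rm End}_R(M)$ acts, and, most delicately, the fact that the contravariant pair in (3) interchanges the CS-Rickart and dual CS-Rickart conditions, so that ``$M$ dual self-CS-Rickart'' corresponds to ``$S$ self-CS-Rickart'' rather than the reverse. Once the identifications $H(M)=S$, $T(S)\cong M$, $L(M)=S$ are recognized as forcing the relevant unit or counit to be an identity, every remaining hypothesis is immediate, and no computation beyond the cited theorems is needed.
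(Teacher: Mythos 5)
Your proposal is correct and follows essentially the same route as the paper: the same adjoint pair $(T,H)=(-\otimes_SM,\ \Hom_R(M,-))$ for parts (1) and (2) via Theorem~\ref{t:t4}, and the same contravariant right adjoint pair $(\Hom_R(-,M),\ \Hom_S(-,M))$ for part (3) via Theorem~\ref{t:t4contrav}(2), with identical verifications of the static/adstatic/reflexive, exactness, and zero-reflection hypotheses. The only cosmetic difference is that you justify reflection of zero objects directly from the definition of faithful exactness, where the paper cites Ishikawa's theorems.
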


\begin{proof} Consider the adjoint pair of covariant functors $(T,H)$, 
where $$T=-\otimes_SM:{\rm Mod}(S)\to {\rm Mod}(R),$$ $$H={\rm Hom}_R(M,-):{\rm Mod}(R)\to {\rm Mod}(S),$$
and the right adjoint pair of contravariant functors $(H_1,H_2)$,
where $$H_1={\rm Hom}_R(-,M):{\rm Mod}(R)\to {\rm Mod}(S^{\rm op}),$$ $$H_2={\rm Hom}_S(-,M):{\rm Mod}(S^{\rm op})\to 
{\rm
Mod}(R).$$

(1) We have $TH(M)\cong M$, whence $M\in {\rm Stat}(H)$. Since $M$ is a flat left $S$-module, $T$ is exact.
If $M$ is a faithfully projective right $R$-module, then $H$ reflects zero objects \cite[Theorem~2.2]{Ishi}.
Finally, use Theorem \ref{t:t4} (1).

(2) We have $HT(S)\cong S$, whence $S\in {\rm Adst}(H)$. Since $M$ is a projective right $R$-module, $H$ is exact.
If $M$ is a faithfully flat left $S$-module, then $T$ reflects zero objects \cite[Theorem~2.1]{Ishi}.
Finally, use Theorem \ref{t:t4} (2).

(3) We have $H_2H_1(M)\cong M$, whence $M\in {\rm Refl}(H_1)$. Since $M$ is an injective left $S$-module, $H_2$ is 
exact. 
If $M$ is a faithfully injective right $R$-module, then $H_1$ reflects zero objects \cite[Theorem~3.1]{Ishi}. 
Finally, use Theorem \ref{t:t4contrav} (2).
\end{proof}

Following \cite{Nasta-04}, for a $G$-graded ring $R$ and two graded right $R$-modules $M$ and $N$, 
we may consider the graded abelian group $\HOM{R}{M}{N}$, whose
$\sigma$-th homogeneous component is $$\HOM{R}{M}{N}_{\sigma} = \{f\in {\rm Hom}_R(M,N) \,\mid\, f(M_{\lambda})\subseteq
N_{\lambda\sigma} \mbox{ for all }\lambda\in G \}.$$ Then $S={\rm END}_R(M)=\HOM{R}{M}{M}$ is a $G$-graded ring
(where the multiplication is the map composition) 
and $M$ is a graded $(S,R)$-bimodule, that is, $S_{\tau}\cdot M_{\sigma}\cdot R_{\lambda}\subseteq
M_{\tau\sigma\lambda}$ for every $\tau,\sigma,\lambda\in G$. 
For a graded right $S$-module $N$, the right $R$-module $N\otimes_SM$ may be graded by
$$(N\otimes_SM)_{\tau}=\left \{\sum_{\sigma\lambda=\tau}n_{\sigma}\otimes m_{\lambda} \mid n_{\sigma}\in
N_{\sigma}, m_{\lambda}\in M_{\lambda}\right \}.$$ 

Following the terminology from modules, one may naturally define the concepts of 
\emph{faithfully flat}, \emph{faithfully projective} and \emph{faithfully injective} graded modules.

\begin{coll} \label{c:endgr} Let $M$ be a graded right $R$-module and $S={\rm END}_R(M)$. 
\begin{enumerate}
\item Assume that $M$ is a flat graded left $S$-module. 
\begin{enumerate}[(i)]
\item If $M$ is a self-$CS$-Rickart graded right $R$-module, then $S$ is a self-$CS$-Rickart graded right $S$-module.
\item If $M$ is a faithfully projective graded right $R$-module and $S$ is a self-$CS$-Rickart graded right $S$-module, 
then $M$ is a self-$CS$-Rickart graded right $R$-module.
\end{enumerate}
\item Assume that $M$ is a projective graded right $R$-module. 
\begin{enumerate}[(i)]
\item If $S$ is a dual self-$CS$-Rickart graded right $S$-module, then $M$ is a dual self-$CS$-Rickart graded right 
$R$-module.
\item If $M$ is a faithfully flat graded left $S$-module and $M$ is a dual self-$CS$-Rickart graded right $R$-module, 
then $S$ is a dual self-$CS$-Rickart graded right $S$-module.
\end{enumerate}
\item Assume that $M$ is an injective graded left $S$-module. 
\begin{enumerate}[(i)]
\item If $M$ is a dual self-CS-Rickart graded right $R$-module, then $S$ is a self-CS-Rickart graded left $S$-module. 
\item If $M$ is a faithfully injective graded right $R$-module and $S$ is a self-CS-Rickart graded left $S$-module, 
then $M$ is a dual self-CS-Rickart graded right $R$-module.
\end{enumerate}
\end{enumerate}
\end{coll}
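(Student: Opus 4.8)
The plan is to mimic the proof of Corollary \ref{c:end} essentially verbatim, replacing the ungraded Hom and tensor functors by their graded counterparts and working throughout in the graded module categories ${\rm gr}(R)$ and ${\rm gr}(S)$. First I would set up the adjoint pair of covariant functors $(T,H)$ with $T=-\otimes_SM:{\rm gr}(S)\to {\rm gr}(R)$ and $H=\HOM{R}{M}{-}:{\rm gr}(R)\to {\rm gr}(S)$, together with the right adjoint pair of contravariant functors $(H_1,H_2)$ given by $H_1=\HOM{R}{-}{M}$ and $H_2=\HOM{S}{-}{M}$. The graded Hom--tensor adjunctions supply these pairs, with $T$ right exact and $H$, $H_1$, $H_2$ left exact, exactly as in the ungraded situation.

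For part (1) I would note that $TH(M)\cong M$ as graded modules, so $M\in {\rm Stat}(H)$; flatness of $M$ as a graded left $S$-module makes $T$ exact; and faithful projectivity of $M$ makes $H$ reflect zero objects via the graded analogue of \cite[Theorem~2.2]{Ishi}. Theorem \ref{t:t4} (1) then yields both implications. Part (2) is analogous, using $HT(S)\cong S$, hence $S\in {\rm Adst}(H)$, together with the fact that projectivity makes $H$ exact and faithful flatness makes $T$ reflect zero objects by the graded version of \cite[Theorem~2.1]{Ishi}; here I would invoke Theorem \ref{t:t4} (2). Part (3) passes to the contravariant pair: $H_2H_1(M)\cong M$ gives $M\in {\rm Refl}(H_1)$, injectivity of $M$ as a graded left $S$-module makes $H_2$ exact, and faithful injectivity makes $H_1$ reflect zero objects by the graded form of \cite[Theorem~3.1]{Ishi}, so that Theorem \ref{t:t4contrav} (2) applies.

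The one point requiring genuine care, and the main obstacle, is justifying that Ishikawa's characterizations of faithfully exact Hom and tensor functors carry over to the graded setting, that is, that a faithfully projective (respectively faithfully flat, faithfully injective) graded module yields a graded functor reflecting zero objects. Since ${\rm gr}(R)$ is a Grothendieck category and the graded faithfully exact notions are defined exactly as in the ungraded case, the cited arguments should transfer line by line, provided one checks that the natural isomorphisms $TH(M)\cong M$, $HT(S)\cong S$ and $H_2H_1(M)\cong M$ respect the gradings. These last verifications are routine, using the graded bimodule structure $S_{\tau}\cdot M_{\sigma}\cdot R_{\lambda}\subseteq M_{\tau\sigma\lambda}$ and the description of the grading on $N\otimes_SM$ recalled above. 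With the graded faithfully exact statements in hand, Theorems \ref{t:t4} and \ref{t:t4contrav} apply precisely as in the ungraded Corollary \ref{c:end}, and the six implications follow.
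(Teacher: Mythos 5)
Your proposal is correct and follows essentially the same route as the paper's proof: the same graded adjoint pairs $(T,H)$ and $(H_1,H_2)$, the same observations $TH(M)\cong M$, $HT(S)\cong S$ and $H_2H_1(M)\cong M$ giving static, adstatic and reflexive objects, the same exactness arguments from (graded) flatness, projectivity and injectivity, and the same appeals to Theorem \ref{t:t4} and Theorem \ref{t:t4contrav}. The only cosmetic difference is that the paper sidesteps your stated worry about graded analogues of Ishikawa's module-theoretic results by citing his general functor-level result \cite[Theorem~1.1]{Ishi} in parts (1) and (2), which applies directly since the graded notions are defined via faithful exactness of the graded functors themselves.
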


\begin{proof} Consider the adjoint pair of covariant functors $(T,H)$, where $$T=-\otimes_SM:{\rm gr}(S)\to {\rm
gr}(R),$$ $$H=\HOM{R}{M}{-}:{\rm gr}(R)\to {\rm gr}(S),$$
and the right adjoint pair of contravariant functors $(H_1,H_2)$, where $$H_1=\HOM{R}{-}{M}:{\rm gr}(R)\to
{\rm gr}(S^{\rm op}),$$ $$H_2=\HOM{S}{-}{M}:{\rm gr}(S^{\rm op})\to {\rm gr}(R).$$

(1) We have $TH(M)\cong M$, whence $M\in {\rm Stat}(H)$. Since $M$ is a flat graded left $S$-module, $T$ is exact.
If $M$ is a faithfully projective graded right $R$-module, then $H$ reflects zero objects \cite[Theorem~1.1]{Ishi}.
Finally, use Theorem \ref{t:t4} (1).

(2) We have $HT(S)\cong S$, whence $S\in {\rm Adst}(H)$. Since $M$ is a projective graded right $R$-module, $H$ is 
exact.
If $M$ is a faithfully flat graded left $S$-module, then $T$ reflects zero objects \cite[Theorem~1.1]{Ishi}.
Finally, use Theorem \ref{t:t4} (2).

(3) We have $H_2H_1(M)\cong M$, whence $M\in {\rm Refl}(H_1)$. Since $M$ is an injective graded left $S$-module, $H_2$ 
is exact. 
If $M$ is a faithfully injective graded right $R$-module, then $H_1$ reflects zero objects \cite[Theorem~3.1]{Ishi}. 
Finally, use Theorem \ref{t:t4contrav} (2).
\end{proof}

Let $C$ be a coalgebra over a field. A left $C$-comodule $Q$ is called \emph{quasi-finite} if 
$\Hom_C(M,Q)$ is finite dimensional for every finite dimensional left $C$-comodule $M$ \cite[Definition~1.1]{Tak}. 
For a $C$-$D$-bicomodule $Q$, consider the \emph{cotensor functor} $-\square_CQ:\mathcal{M}^C\to \mathcal{M}^D$ 
(see \cite[p.~87]{DNR}). If $Q$ is quasi-finite as a right $D$-comodule, then the cotensor functor has 
a left adjoint denoted by $h_D(Q,-):\mathcal{M}^D\to \mathcal{M}^C$ \cite[Proposition~1.10]{Tak}, 
and called the \emph{cohom functor}.

\begin{coll} \label{c:endcom} Let $D$ be a coalgebra over a field, let $Q$ be a quasi-finite
injective right $D$-comodule, and let $C={\rm h}_D(Q,Q)$. Then:
\begin{enumerate}[(i)]
\item If $Q$ is a dual self-CS-Rickart right $D$-comodule, then $C$ is a dual self-CS-Rickart right $C$-comodule. 
\item If $h_D(Q,-)$ reflects zero objects and $C$ is a dual self-CS-Rickart right $C$-comodule, 
then $Q$ is a dual self-CS-Rickart right $D$-comodule.
\end{enumerate}
\end{coll}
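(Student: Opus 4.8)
The plan is to exhibit the pair (cohom, cotensor) as an adjoint pair of covariant functors and then to invoke Theorem~\ref{t:t4}~(2) in the self case $M=N=Q$. I set $\mathcal{A}=\mathcal{M}^D$, $\mathcal{B}=\mathcal{M}^C$, and take $L=h_D(Q,-)\colon\mathcal{M}^D\to\mathcal{M}^C$ together with $R=-\square_CQ\colon\mathcal{M}^C\to\mathcal{M}^D$. Since $Q$ is quasi-finite, the cohom functor $L$ is left adjoint to the cotensor functor $R$, so $(L,R)$ is an adjoint pair of covariant functors of the kind required in Theorem~\ref{t:t4}. By the very definition of $C$ we have $L(Q)=h_D(Q,Q)=C$, so that, taking $M=N=Q$, the two implications of Theorem~\ref{t:t4}~(2) read exactly as parts (i) and (ii) of the corollary.

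What remains is to verify the two standing hypotheses of Theorem~\ref{t:t4}~(2), namely that $R$ is exact and that $Q\in{\rm Adst}(R)$. For the adstatic condition, I would observe that the unit $\eta_Q\colon Q\to R(L(Q))=C\square_CQ$ is the canonical isomorphism identifying the left $C$-comodule $Q$ with its cotensor over $C$ with the coalgebra $C$; hence $\eta_Q$ is an isomorphism and $Q\in{\rm Adst}(R)$. This identification is part of the Morita--Takeuchi description of quasi-finite injective comodules, for which I would cite \cite{Tak} (and \cite{DNR}). For exactness, the cotensor functor $R=-\square_CQ$ is always left exact, so the issue is its right exactness; this is exactly where the injectivity of $Q$ as a right $D$-comodule enters, and it guarantees that $R$ is exact. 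This is the step I expect to require the most care to state precisely, and I would settle it by appealing to the relevant exactness result for cotensor products in \cite{Tak,DNR}.

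Once these two facts are established, the corollary follows with no further computation. Theorem~\ref{t:t4}~(2)(i) applied to $M=N=Q$ shows that if $Q$ is dual self-CS-Rickart in $\mathcal{M}^D$, then $L(Q)=C$ is dual $C$-CS-Rickart, that is, dual self-CS-Rickart, in $\mathcal{M}^C$, which is part (i). For part (ii), the hypothesis that $h_D(Q,-)=L$ reflects zero objects is precisely the additional assumption required in Theorem~\ref{t:t4}~(2)(ii), which then yields that $Q$ is dual self-CS-Rickart in $\mathcal{M}^D$ as soon as $C$ is dual self-CS-Rickart in $\mathcal{M}^C$.
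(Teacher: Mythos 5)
Your proposal is correct and follows essentially the same route as the paper: both exhibit the adjoint pair $(L,R)=(h_D(Q,-),\,-\square_CQ)$, verify that $Q\in{\rm Adst}(R)$ (via $RL(Q)\cong Q$) and that $R$ is exact because $Q$ is an injective right $D$-comodule (the paper cites \cite[Theorem~2.4.17]{DNR} for this), and then apply Theorem~\ref{t:t4}~(2) with $M=N=Q$. Your additional care in identifying the unit $\eta_Q$ with the canonical isomorphism $Q\cong C\square_CQ$ only makes explicit what the paper leaves implicit.
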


\begin{proof} Consider the adjoint pair of covariant functors $(L,R)$, where 
$$L=h_D(Q,-):\mathcal{M}^D\to \mathcal{M}^C,$$ $$R=-\square_CQ:\mathcal{M}^C\to \mathcal{M}^D.$$ 
We have $RL(Q)\cong Q$, whence $Q\in {\rm Adst}(R)$. Since $Q$ is an injective right $D$-comodule, 
$R$ is exact \cite[Theorem~2.4.17]{DNR}. Finally, use Theorem \ref{t:t4} (2).
\end{proof}

\end{document}